\theoremstyle{plain}
\newtheorem{thm}{Theorem}[section]
\newtheorem{prop}[thm]{Proposition}
\theoremstyle{definition}
\newtheorem{conj}{Conjecture}[section]
\theoremstyle{remark}
\theoremstyle{remark}
\definecolor{darkblue}{rgb}{0,0,0.5} 
\begin{document}

\title{Asymptotic dimension of Artin groups and a new upper bound for Coxeter groups.}

\author{Panagiotis Tselekidis}


\maketitle



\begin{abstract}
If $A_\Gamma$ ($W_\Gamma$) is the Artin (Coxeter) group with defining graph $\Gamma$ we denote by $Sim(\Gamma)$ the number of vertices of the largest clique in $\Gamma$. We show that $asdimA_\Gamma \leq Sim(\Gamma)$, if $Sim(\Gamma)=2$. We conjecture that the inequality holds for every Artin group. We prove that if for all free of infinity Artin (Coxeter) groups the conjecture holds, then it holds for all Artin (Coxeter) groups. 
As a corollary, we show that $asdimW_\Gamma \leq Sim(\Gamma)$ for all Coxeter groups, which is the best known upper bound for the asymptotic dimension of Coxeter Groups. As a further corollary, we show that the asymptotic dimension of any Artin group of large type with $Sim(\Gamma)=3$ is exactly two.


\end{abstract}

\tableofcontents

\section{Introduction}

Artin groups are easily defined via presentations, like Coxeter groups, but they are often very poorly understood. There are numerous conjectures about them, for example, it is not known in general whether they are torsion free. While Artin groups remain much more mysterious
than their Coxeter relatives, a driving theme behind current research has been to show that
these groups are as well behaved as Coxeter groups.

Some examples of Artin groups are the free non-abelian groups, free abelian groups, and RAAGs. 
In recent years, Artin and Coxeter groups proved to be a very rich source of examples and counterexamples of interesting
phenomena in geometric group theory.


In this paper we are interested in the geometry of Artin and Coxeter groups and in particular in the asymptotic dimension of these groups. The asymptotic dimension achieved particular prominence in geometric group theory after a paper of Guoliang Yu, (see \cite{Yu}) which proved the Novikov higher signature conjecture for manifolds whose fundamental group has finite asymptotic dimension.
It is known for many classes of groups that they have finite asymptotic dimension, for instance, hyperbolic, relative hyperbolic, hierarchically hyperbolic, mapping class groups of surfaces, and one relator groups have finite asymptotic dimension (see \cite{BD08}, \cite{Os}, \cite{HHS}, \cite{BBF}, \cite{Mats}, \cite{PT}).

However, there are still many open questions regarding asymptotic dimension, for example,\\
Do all Artin groups have finite asymptotic dimension? \cite{Dra}\\
Do all $Out(F_n )$ groups have finite asymptotic dimension, when $n>2$? \cite{Dra}\\

Recently, it was proved in \cite{HS} that Artin groups being both of large and of hyperbolic type are hierarchically hyperbolic. As a consequence, these Artin groups have finite asymptotic dimension. However, no specific bound was obtained.\\

The purpose of this paper is to give some new results about the asymptotic dimension of Artin and Coxeter groups.
To be more precise, let $\Gamma$ be a finite simplicial labeled graph, we denote by $A_\Gamma$ ($W_\Gamma$) the \textit{Artin group} (\textit{Coxeter group}) associated to the graph $\Gamma$.
We set 
\begin{center}
$Sim(\Gamma)= max \lbrace n \mid $ $\Gamma$ contains the 1-skeleton of the standard $(n-1)$-simplex $\Delta^{n-1} \rbrace.$
\end{center} 
We conjecture that:
\begin{conj}\label{Artin.Q1.1}
$asdimA_\Gamma \leq Sim(\Gamma)$, for all Artin groups.
\end{conj}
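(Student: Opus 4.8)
Since the final statement is a conjecture rather than a theorem, what follows is a program rather than a complete argument, and I will flag where the genuine difficulty lies. The plan is to use the reduction theorem of this paper to pass to the case where $A_\Gamma$ is \emph{free of infinity}, i.e. where $\Gamma$ is a complete graph $K_N$ on $N = Sim(\Gamma)$ vertices, and then to induct on $N$. The base cases $N \le 2$ are exactly the content of the main theorem: for $N = 1$ we have $A_\Gamma \cong \mathbb{Z}$ with $\asdim = 1$, and for $N = 2$ free of infinity $A_\Gamma$ is a dihedral Artin group, which is $\mathrm{CAT}(0)$ with $\asdim = 2$. For the inductive step I would look for an action of $A_\Gamma$ on a contractible, finite-dimensional complex whose cell stabilizers are Artin groups on proper subgraphs of strictly smaller clique number, and feed the inductive bounds on those stabilizers into a Hurewicz-type (fibering) theorem for asymptotic dimension.

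The natural candidate is the Deligne complex $\Phi_\Gamma$: its poset consists of the cosets $gA_T$ of standard parabolics with $T$ spherical (of finite Coxeter type), so $\dim \Phi_\Gamma$ equals the maximal size of a spherical subset of generators, and the stabilizer of a simplex labelled by $T$ is a conjugate of the spherical Artin group $A_T$. The key point is that every spherical subset is in particular a clique of $\Gamma$, whence $\dim \Phi_\Gamma \le Sim(\Gamma)$. Moreover, when $A_\Gamma$ is not itself of spherical type, every spherical $T$ has $|T| \le N-1$, so $\dim \Phi_\Gamma \le N-1$ and each stabilizer $A_T$ is a free-of-infinity Artin group of clique number at most $N-1$, to which the inductive hypothesis applies. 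Combining the bound on $\dim \Phi_\Gamma$ with the inductive bounds on the stabilizers through an asymptotic Hurewicz theorem (in the spirit of Bell--Dranishnikov) would then yield $\asdim A_\Gamma \le Sim(\Gamma)$ in the non-spherical case.

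This leaves the spherical (finite-type) building blocks, where $\Gamma = K_N$ and the whole group sits as a top stabilizer, so the Deligne complex does not decompose it and the induction on clique number stalls. Here I would instead exploit the infinite cyclic center $Z$ of an irreducible spherical Artin group and the central extension $1 \to Z \to A_\Gamma \to A_\Gamma/Z \to 1$; since $\asdim$ is subadditive under extensions, it would suffice to prove $\asdim(A_\Gamma/Z) \le N-1$, reducing the effective rank by one at the cost of a single $\mathbb{Z}$-factor. Equivalently one could try to build a proper action of $A_\Gamma$ on an $N$-dimensional nonpositively curved space directly from its Garside or biautomatic structure and apply an $\asdim \le \dim$ estimate.

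The main obstacle is that all of this presupposes geometric input that is itself open in general. The passage through $\Phi_\Gamma$ requires the Deligne complex to be contractible (the $K(\pi,1)$ conjecture) and to carry enough nonpositive curvature for the fibering theorem to apply, and these are known only for restricted classes (spherical, $FC$, two-dimensional, affine). Worse, the asymptotic Hurewicz step must be \emph{dimension-sharp}: a crude additive bound of the shape $\dim \Phi_\Gamma + \max_T \asdim A_T$ would give roughly $2N$ rather than the required $N$, precisely because the stabilizers are large (unlike the free action of a RAAG on its Salvetti cube complex, where the stabilizers are trivial and $\asdim \le \dim$ is immediate). So the stabilizer and base contributions must be coordinated, most plausibly by arranging that the $+1$ coming from the central $\mathbb{Z}$ of each spherical stabilizer is absorbed into the dimension count rather than added to it. In short, the hard part is not the inductive bookkeeping but producing, uniformly in $N$, a finite-dimensional nonpositively curved model for free-of-infinity (and especially spherical) Artin groups together with a sharp asymptotic Hurewicz theorem; indeed a positive answer would in particular settle Dranishnikov's question of whether every Artin group has finite asymptotic dimension, which is why I expect this to be genuinely difficult rather than a routine extension of the $Sim(\Gamma) = 2$ case.
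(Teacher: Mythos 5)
You were asked about Conjecture \ref{Artin.Q1.1}, and the first thing to note is that the paper contains no proof of it: what the paper actually establishes is the reduction theorem (Theorem \ref{Artin.T5.1}), namely that the conjecture holds for all Artin groups provided it holds for the free-of-infinity ones, together with the case $Sim(\Gamma)=2$ (Theorem \ref{Artin.T1.1}) and the Coxeter analogue (Theorem \ref{Artin1.11}). Your proposal matches this structure: your first step is literally the paper's Theorem \ref{Artin.T5.1} (proved there by a double induction on $Sim(\Gamma)$ and $\sharp V(\Gamma)$, splitting $A_\Gamma$ as an amalgam of parabolic subgroups over a parabolic subgroup and applying the graph-of-groups estimate of Theorem \ref{Artin.T2.1}), and you correctly identify that everything past that point --- the free-of-infinity, and in particular the spherical, case --- is genuinely open, so that your Deligne-complex and central-extension program is a proposal rather than a proof, contingent on the $K(\pi,1)$ conjecture and on a dimension-sharp Hurewicz-type estimate, as you yourself flag. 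Your diagnosis of where the difficulty sits is accurate; it is worth observing that for Coxeter groups the analogous program does close up (this is exactly how Theorem \ref{Artin1.11} is proved), because free-of-infinity Coxeter groups come with the bound $\asdim W_\Gamma \leq \sharp V(\Gamma) = Sim(\Gamma)$ from Januszkiewicz's embedding theorem, and no analogue of that theorem is known for Artin groups --- indeed even finiteness of $\asdim A_\Gamma$ is open.

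Two smaller points. For the base case $N=2$: being CAT(0) does not by itself bound asymptotic dimension (finiteness of $\asdim$ for general CAT(0) groups is open; Wright's theorem covers only CAT(0) cube complexes), so your justification for the dihedral case is not quite right as stated; the paper instead gets $\asdim A_\Gamma = 2$ from its one-relator theorem (Theorem \ref{Artin.T2.3}), and alternatively one can use that dihedral Artin groups are commensurable with $F_k \times \mathbb{Z}$. Second, in your spherical step the quotient $A_\Gamma /Z$ by the infinite cyclic center need not be an Artin group, so the induction on clique number cannot be fed into it directly; this is precisely the point at which the program stalls, as you acknowledge.
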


\begin{conj}\label{Artin.Q1.2}
$asdimW_\Gamma \leq Sim(\Gamma)$, for all Coxeter groups.
\end{conj}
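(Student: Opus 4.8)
The plan is to obtain Conjecture~\ref{Artin.Q1.2} as a corollary of the reduction theorem announced in the abstract, together with one Coxeter-specific input, so that all of the difficulty is concentrated in the free of infinity case. I would work with two standard tools. First, whenever the defining graph splits as $\Gamma=\Gamma_1\cup\Gamma_2$ with $\Gamma_1,\Gamma_2$ full subgraphs meeting in $\Gamma_0=\Gamma_1\cap\Gamma_2$ and with no edge joining $\Gamma_1\setminus\Gamma_0$ to $\Gamma_2\setminus\Gamma_0$, the group decomposes as an amalgam $W_\Gamma\cong W_{\Gamma_1}\ast_{W_{\Gamma_0}}W_{\Gamma_2}$ over the corresponding standard parabolic subgroups. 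Second, the Bell--Dranishnikov estimate $\asdim(A\ast_C B)\le\max\{\asdim A,\asdim B,\asdim C+1\}$ from \cite{BD08}.

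First I would prove the reduction by induction on $|S|$: assuming the inequality for every free of infinity Coxeter group, I deduce it for every Coxeter group. If $\Gamma$ is complete there is nothing to prove. Otherwise choose a vertex $t$ possessing a non-neighbour and set $\Gamma_1=\Gamma\setminus\{t\}$, $\Gamma_2=\mathrm{St}(t)=\{t\}\cup N(t)$ and $\Gamma_0=N(t)$. These are full subgraphs with $\Gamma_1\cup\Gamma_2=\Gamma$ and $\Gamma_1\cap\Gamma_2=\Gamma_0$, and the only candidate for a forbidden edge would join $t$ to a non-neighbour, so the amalgam decomposition applies. Each of $\Gamma_1,\Gamma_2,\Gamma_0$ has strictly fewer vertices than $\Gamma$, so by induction their asymptotic dimensions are bounded by their clique numbers. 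The bookkeeping closes because $Sim(\Gamma_1)\le Sim(\Gamma)$ (a subgraph), while every clique of $\mathrm{St}(t)$ containing $t$ is $t$ together with a clique of $N(t)$, whence $Sim(\Gamma_2)=Sim(N(t))+1=Sim(\Gamma_0)+1\le Sim(\Gamma)$. Substituting into the amalgam estimate gives
\[
\asdim W_\Gamma\le\max\{\asdim W_{\Gamma_1},\asdim W_{\Gamma_2},\asdim W_{\Gamma_0}+1\}\le\max\{Sim(\Gamma_1),Sim(\Gamma_2),Sim(\Gamma_0)+1\}\le Sim(\Gamma),
\]
completing the induction; the same argument works verbatim for $A_\Gamma$, which is why the reduction is stated for both families.

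It then remains to supply the base case for Coxeter groups. A free of infinity Coxeter group has complete defining graph, so $Sim(\Gamma)=|S|$, and here I would invoke the Coxeter-specific input that the asymptotic dimension of any Coxeter group is bounded above by its rank $|S|$ --- a bound made available by its proper cocompact action on the finite-dimensional CAT(0) Davis complex, whose dimension is the maximal rank of a finite standard parabolic and is in any case at most $|S|$. Combined with the reduction, this yields $\asdim W_\Gamma\le Sim(\Gamma)$ for every Coxeter group. The hard part is precisely this base case: it is exactly the free of infinity input that is missing for Artin groups beyond $Sim(\Gamma)=2$, so the corollary is only as strong as the rank bound one can prove, and it is the reflection/Davis-complex geometry, unavailable for general Artin groups, that renders the free of infinity Coxeter case tractable.
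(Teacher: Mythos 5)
Your overall strategy is the same as the paper's: prove the reduction to the free of infinity case (the paper's Theorem \ref{Artin.T5.1}) and then settle complete graphs by the rank bound $\asdim W_\Gamma \leq \sharp V(\Gamma)$. Your proof of the reduction itself, however, is genuinely different and in fact cleaner. You split along the star of a vertex $t$ that has a non-neighbour, $W_\Gamma \cong W_{\Gamma\setminus\{t\}} \ast_{W_{N(t)}} W_{\mathrm{St}(t)}$, and run a single induction on the number of vertices, with the clique-number bookkeeping $Sim(\mathrm{St}(t)) = Sim(N(t))+1 \leq Sim(\Gamma)$ closing the estimate. The paper instead runs a double induction (on $Sim(\Gamma)$, and for fixed $Sim(\Gamma)$ on $\sharp V(\Gamma)$) and splits by deleting carefully chosen vertices of the maximal complete subgraphs, with a two-case analysis according to whether there is one such subgraph or several. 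Your decomposition avoids that case analysis entirely; both arguments rest on the same two facts (standard parabolic subgroups are the groups of the full subgraphs, and the amalgamated-product estimate of Theorem \ref{Artin.T2.1}, equivalently the bound from \cite{BD08}), and, as you note, your version applies verbatim to Artin groups, which is what the reduction theorem requires.

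The gap is in your base case. You justify $\asdim W_\Gamma \leq \sharp V(\Gamma)$ by the proper cocompact action of $W_\Gamma$ on the Davis complex, described as finite-dimensional and CAT(0). But there is no theorem bounding the asymptotic dimension of a group by the dimension of a CAT(0) space on which it acts properly cocompactly; indeed, it is a well-known open problem whether CAT(0) groups have finite asymptotic dimension at all. The result you are implicitly reaching for, Wright's theorem \cite{Wr}, applies only to CAT(0) \emph{cube} complexes, and the Davis complex of a general Coxeter group is a piecewise Euclidean CAT(0) complex (by Moussong's theorem), not a cube complex --- it is a cube complex essentially only in the right-angled case. So, as written, your base case is unproven. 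The fact you need is nevertheless true, and it is exactly what the paper invokes: Januszkiewicz's isometric embedding theorem \cite{TJ} gives $\asdim W_\Gamma \leq \sharp V(\Gamma)$ for every Coxeter group, and for a complete graph $\sharp V(\Gamma) = Sim(\Gamma)$. Replacing your Davis-complex sentence by this citation repairs the argument, and with that repair your proof establishes the conjecture for all Coxeter groups, i.e.\ Theorem \ref{Artin1.11}.
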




It is known that if $A_\Gamma$ is a RAAG, then $asdimA_\Gamma=Sim(\Gamma)$ see (\cite{PT} and \cite{Wr}). Since when $Sim(\Gamma)=1$ the group $A_\Gamma$ is a free group, $asdim(A_\Gamma)=1$. We prove that:

\begin{thm}\label{Artin.T1.1}
Let $A_\Gamma $ be an Artin group with $Sim(\Gamma)=2$. Then 
\begin{center}
  $asdim A_\Gamma  =2$.
\end{center}
\end{thm}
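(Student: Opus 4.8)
The plan is to establish the two bounds $\asdim A_\Gamma \ge 2$ and $\asdim A_\Gamma \le 2$ separately. For the lower bound, note that $Sim(\Gamma)=2$ forces $\Gamma$ to contain an edge $\{s,t\}$ carrying a finite label $m_{st}\ge 2$. By van der Lek's theorem the corresponding standard parabolic subgroup embeds in $A_\Gamma$ and is the dihedral Artin group $A_{m_{st}}$. I would then exhibit a copy of $\mathbb{Z}^2$ inside it: for $m_{st}=2$ the group is literally $\mathbb{Z}^2$, while for $m_{st}\ge 3$ its center is infinite cyclic (generated by a power of the Garside element $\Delta$) and together with $\langle s\rangle$ spans a rank-two free abelian subgroup. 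Since asymptotic dimension is monotone under finitely generated subgroups, this gives $\asdim A_\Gamma \ge \asdim \mathbb{Z}^2 = 2$.

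For the upper bound I would induct on the number $n$ of vertices of $\Gamma$, the structural input being the visual amalgamated decomposition of Artin groups: whenever a full subgraph $\Gamma_0$ separates $\Gamma$ into $\Gamma_1$ and $\Gamma_2$ (every edge of $\Gamma$ lying in $\Gamma_1$ or in $\Gamma_2$), one has $A_\Gamma = A_{\Gamma_1} *_{A_{\Gamma_0}} A_{\Gamma_2}$. The base cases $n\le 2$ are handled directly: a single vertex yields $\mathbb{Z}$ and two non-adjacent vertices yield the free group $F_2$, both of asymptotic dimension at most $1$, while an edge yields a dihedral Artin group $A_m$, which sits in a central extension $1\to\mathbb{Z}\to A_m\to Q\to 1$ with $Q$ virtually free; the extension theorem for asymptotic dimension then gives $\asdim A_m\le \asdim\mathbb{Z}+\asdim Q=2$.

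For the inductive step with $n\ge 3$, triangle-freeness prevents $\Gamma$ from being complete, so some vertex $v$ is non-dominating (adjacent to fewer than $n-1$ vertices). Taking $\Gamma_1=\mathrm{star}(v)$, $\Gamma_2=\Gamma\setminus v$ and $\Gamma_0=\mathrm{lk}(v)$, every edge of $\Gamma$ lies in $\Gamma_1$ or $\Gamma_2$, so $A_\Gamma=A_{\mathrm{star}(v)} *_{A_{\mathrm{lk}(v)}} A_{\Gamma\setminus v}$. Both $\mathrm{star}(v)$ and $\Gamma\setminus v$ are triangle-free with strictly fewer than $n$ vertices, so the induction hypothesis bounds the asymptotic dimension of each factor by $2$; and since $\mathrm{lk}(v)$ is an independent set, $A_{\mathrm{lk}(v)}$ is free, of asymptotic dimension at most $1$. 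Plugging these into the combination theorem for amalgams, $\asdim(A *_C B)\le\max\{\asdim A,\asdim B,\asdim C+1\}$, yields $\asdim A_\Gamma\le\max\{2,2,2\}=2$, closing the induction and giving $\asdim A_\Gamma=2$.

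The step I expect to be the crux is securing the sharp form of the combination theorem, with the edge term appearing as $\asdim C+1$ rather than the weaker $1+\max\{\asdim A,\asdim B\}$; the latter would only give $\asdim A_\Gamma\le 3$ and would not close the induction. I would derive the sharp bound from the Bell--Dranishnikov Hurewicz-type theorem applied to the action of $A_\Gamma$ on the Bass--Serre tree of the decomposition, the delicate points being to verify that the vertex and edge stabilizers (conjugates of $A_{\Gamma_1}$, $A_{\Gamma_2}$, $A_{\Gamma_0}$) have uniformly bounded asymptotic dimension and that the orbit map to the tree satisfies the required asymptotic Hurewicz inequality. A secondary point, which I would justify by van der Lek's theorem, is that the visual decomposition genuinely presents $A_{\mathrm{lk}(v)}$ as the edge group and that all these parabolic subgroups embed as asserted.
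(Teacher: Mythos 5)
Your proposal is correct, and its engine is the one the paper itself uses: the visual amalgam $A_\Gamma = A_{\mathrm{star}(v)} \ast_{A_{\mathrm{lk}(v)}} A_{\Gamma\setminus v}$ (triangle-freeness making $\mathrm{lk}(v)$ an independent set, so the edge group is free) fed into the sharp combination bound $\asdim(A\ast_C B)\le\max\{\asdim A,\asdim B,\asdim C+1\}$. One practical remark: you do not need to re-derive that bound from the Bell--Dranishnikov Hurewicz theorem, which you flagged as the crux; it is exactly theorem \ref{Artin.T2.1} of the paper (proved in \cite{PT}) and is also Dranishnikov's amalgam theorem \cite{Dra08}, so it can be quoted off the shelf. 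Where you genuinely diverge is in the organization and the rank-one inputs. The paper argues in two stages: trees are handled first (proposition \ref{Artin.P3.1}) by viewing $A_\Gamma$ as a graph of groups whose vertex groups are the dihedral Artin groups of the edges and whose edge groups are infinite cyclic, and then graphs containing cycles are treated by induction on the number of cycles, deleting a vertex of a cycle and using the same star/link amalgam (theorem \ref{Artin.T3.2}); your single induction on the vertex count, deleting a non-dominating vertex, subsumes the tree case and is arguably cleaner bookkeeping. For the dihedral base case and the lower bound, the paper invokes its one-relator classification (theorem \ref{Artin.T2.3}): a dihedral Artin group is a torsion-free, non-free one-relator group, hence has asymptotic dimension exactly $2$, which gives both bounds in one stroke. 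You instead use the central extension $1\to\mathbb{Z}\to A_m\to Q\to 1$ with $Q$ virtually free for the upper bound, and a $\mathbb{Z}^2$ subgroup plus monotonicity for the lower bound; this avoids the one-relator machinery but requires the standard Garside facts, and in particular for $m$ odd you should actually verify that no power of $s$ is central --- e.g.\ via $A_m\cong\langle x,y\mid x^2=y^m\rangle$ with center $\langle x^2\rangle$, where the image of $s$ in the quotient $\mathbb{Z}_2\ast\mathbb{Z}_m$ is a cyclically reduced word of length two, hence of infinite order --- since that is precisely what makes $\langle s\rangle\cap Z$ trivial and $\langle s, Z\rangle\cong\mathbb{Z}^2$.
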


\begin{thm}\label{Artin.T1.2}
Let $A_\Gamma $ be an Artin group of large type with $Sim(\Gamma)=3$. Then
 \begin{center}
 $ asdim A_\Gamma  =2$.
\end{center}
\end{thm}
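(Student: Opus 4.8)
The plan is to prove the two inequalities $\asdim A_\Gamma \ge 2$ and $\asdim A_\Gamma \le 2$ separately; the first is soft and the whole content lies in the upper bound.

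For the lower bound I would argue as follows. Since $Sim(\Gamma)=3$, the graph $\Gamma$ contains an edge $\{s,t\}$ (in fact a triangle), necessarily labelled by some $m\ge 3$ because $A_\Gamma$ is of large type. The standard parabolic subgroup $A_{\{s,t\}}$ is the Artin group on the induced subgraph, which is a single edge, so its defining graph has $Sim=2$; by Theorem~\ref{Artin.T1.1} we get $\asdim A_{\{s,t\}}=2$. Since balls in the word metric of $A_\Gamma$ are finite, the inclusion $A_{\{s,t\}}\hookrightarrow A_\Gamma$ is a coarse embedding, hence asymptotic dimension is monotone under it and $\asdim A_\Gamma\ge 2$. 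It then remains to prove $\asdim A_\Gamma\le 2$.

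For the upper bound I would first invoke the reduction to free of infinity groups established in this paper. Being free of infinity means that $\Gamma$ is a complete graph (no pair labelled $\infty$); combined with $Sim(\Gamma)=3$ and large type, the resulting base graphs are exactly $K_3$ with all labels $\ge 3$, i.e.\ the large type triangle Artin groups $A(m,n,p)$ with $m,n,p\ge 3$. Because I want the sharp value $2$ rather than the conjectural bound $3$, I would run the reduction while keeping track of the actual asymptotic dimensions of the free of infinity pieces that occur: single vertices give $\mathbb{Z}$ $(\asdim 1)$, edges give dihedral Artin groups $(\asdim 2$ by Theorem~\ref{Artin.T1.1}$)$, and triangles give $A(m,n,p)$. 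Everything therefore comes down to showing $\asdim A(m,n,p)\le 2$. For this I would use nonpositive curvature: any finite type Artin group on three generators must contain a commuting pair (a label $2$), so in large type no triple of generators spans a spherical parabolic. Hence $A(m,n,p)$ is $2$-dimensional and acts cocompactly on its Deligne complex, which is $2$-dimensional and CAT(0) (Charney--Davis). I would then bound $\asdim A(m,n,p)$ from this action, using that the vertex stabilizers are cyclic or dihedral Artin parabolics (of $\asdim\le 2$), that these parabolics are quasi-isometrically embedded, and that the action is acylindrical, so that a controlled Hurewicz / graph-of-groups estimate along the CAT(0) geometry yields $2$.

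The hard part will be obtaining $2$ rather than $3$. The dihedral stabilizers that appear (both as vertex groups in the Deligne action and as edge groups in the reduction) already have $\asdim 2$, equal to the target, so the naive Bell--Dranishnikov amalgamation bound $\max\{\asdim A,\asdim B,\asdim C+1\}$ and the coarse Hurewicz bound $\dim + \asdim(\text{fibre})$ both overshoot, to $3$ and $4$ respectively. The crux is to show that the ``$+1$'' does not occur, that is, that amalgamating over (and acting with stabilizers equal to) the $\asdim$-$2$ dihedral parabolics does not raise the dimension. I expect this to require exploiting the specific coarse geometry of these parabolic cosets, their quasiconvexity together with acylindricity / bounded coarse intersection, rather than any black box estimate, and this is where the genuine work of the proof concentrates.
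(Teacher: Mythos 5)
Your proposal is not yet a proof, and the gap sits exactly at the point you yourself flag: the entire content of the theorem is the upper bound $\asdim A_\Gamma \le 2$, and for that you only offer a plan (Deligne complex, CAT(0) geometry, acylindricity, quasiconvexity of dihedral cosets) together with the admission that beating the ``$+1$'' in the amalgamation/Hurewicz estimates ``is where the genuine work of the proof concentrates.'' Nothing you outline actually carries out that work, and none of the paper's tools (Theorems \ref{Artin.T2.1}--\ref{Artin.T2.3}) would execute it either; so neither the triangle case $A(m,n,p)$ nor the general case is established. (Your lower bound is fine: a large-type edge gives a dihedral parabolic with $\asdim=2$ by Theorem \ref{Artin.T3.2}, and finitely generated subgroups are coarsely embedded, so $\asdim A_\Gamma \ge 2$; this is also implicit in the paper.)

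The concrete idea you are missing is the one the paper uses for the base case: by Jankiewicz \cite{KJ}, every large type Artin group on a triangle splits as a finite graph of \emph{free} groups. Feeding that splitting into Theorem \ref{Artin.T2.1} gives $\asdim A(m,n,p) \le \max\{1,\,1+1\} = 2$ at once. In other words, the sharp value $2$ is not obtained by improving the amalgamation estimate over the $\asdim$-$2$ dihedral parabolics, as you propose to do; it is obtained by splitting over entirely different, $\asdim$-$1$ subgroups, so the ``$+1$'' lands on $1$ and the problem you identify never arises. This is a citation-level fix, not a new coarse-geometric argument, and it is what your proposal needed instead of the unproven acylindricity step. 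For graphs that are not a single triangle, the paper then re-runs the induction in the proof of Theorem \ref{Artin.T5.1}, using that every parabolic subgroup of a large type Artin group is again of large type. Your side remark that this reduction also amalgamates over dihedral ($\asdim$-$2$) edge groups and so threatens to give $3$ is a legitimate concern about that step -- in particular your sentence ``everything therefore comes down to showing $\asdim A(m,n,p)\le 2$'' is not accurate as stated -- but in any event an argument that, like yours, lacks the free-group splittings of the building blocks never produces the value $2$ anywhere.
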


It is known by an isometric embedding theorem of Januszkiewicz (see \cite{TJ}) that Coxeter groups have finite asymptotic dimension. In particular, Januszkiewicz's theorem shows that for any Coxeter group $W_\Gamma$ with defining graph $\Gamma$ we have the following upper bound: $asdimW_\Gamma \leq \sharp V(\Gamma)$.\\
Dranishnikov notes that this can be improved to the following: $asdimW_\Gamma \leq ch(N(W_\Gamma ))$.
Where $N(W_\Gamma)$ is the nerve
of $W_\Gamma$ and $ch(N(W_{\Gamma} ))$ is the chromatic number of the 1-skeleton of $N(W_{\Gamma} )$, we note that the 1-skeleton of $N(W_{\Gamma} )$ is $\Gamma$. In general, $Sim(\Gamma) \leq ch( \Gamma ) $.\\


Using theorem \ref{Artin.T1.1} we prove that:
\begin{thm}\label{Artin.T1.3}
If for all free of infinity Artin (Coxeter) groups the conjecture \ref{Artin.Q1.1} (conjecture \ref{Artin.Q1.2}) holds, then it holds for all Artin (Coxeter) groups.
\end{thm}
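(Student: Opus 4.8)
The plan is to argue by induction on the number of vertices $|V(\Gamma)|$, proving the stronger statement that Conjecture~\ref{Artin.Q1.1} holds for \emph{every} Artin group while invoking the hypothesis only when the defining graph is complete (equivalently, free of infinity). If $\Gamma$ is complete, then $A_\Gamma$ is free of infinity and $\asdim A_\Gamma \le Sim(\Gamma)$ holds by assumption; this also settles the trivial base cases. If $\Gamma$ is not complete, choose two non-adjacent vertices $u,v$. Then the star $\mathrm{St}(v)=\{v\}\cup\mathrm{Lk}(v)$ is a \emph{proper} full subgraph of $\Gamma$, and $\Gamma$ is the union of the full subgraphs $\Gamma\setminus\{v\}$ and $\mathrm{St}(v)$ with intersection $\mathrm{Lk}(v)$. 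Since $v$ has no edges to vertices outside its star, this union is a \emph{visual} decomposition, so van der Lek's description of standard parabolic subgroups yields the amalgamated product
\[
A_\Gamma \;=\; A_{\Gamma\setminus\{v\}} \;*_{A_{\mathrm{Lk}(v)}}\; A_{\mathrm{St}(v)} .
\]
Each of $\Gamma\setminus\{v\}$, $\mathrm{St}(v)$, $\mathrm{Lk}(v)$ has strictly fewer vertices than $\Gamma$, so the inductive hypothesis applies to all three factors.

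Two ingredients make the estimate close. For the first, I would use the refined Bell--Dranishnikov inequality (\cite{BD08})
\[
\asdim(A*_C B)\;\le\;\max\{\asdim A,\ \asdim B,\ \asdim C+1\},
\]
which is essential: the cruder bound $\max\{\asdim A,\asdim B\}+1$ would only give $Sim(\Gamma)+1$. For the second, observe $Sim(\Gamma\setminus\{v\})\le Sim(\Gamma)$ and $Sim(\mathrm{St}(v))\le Sim(\Gamma)$ since these are subgraphs, while the crucial drop is
\[
Sim(\mathrm{Lk}(v)) \;\le\; Sim(\Gamma)-1,
\]
because any clique $K\subseteq\mathrm{Lk}(v)$ extends to the clique $K\cup\{v\}$ of $\Gamma$. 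Combining these with the inductive hypothesis gives
\[
\asdim A_\Gamma \le \max\{Sim(\Gamma\setminus\{v\}),\ Sim(\mathrm{St}(v)),\ Sim(\mathrm{Lk}(v))+1\} \le Sim(\Gamma),
\]
which closes the induction. The Coxeter case is identical: the same visual decomposition $W_\Gamma = W_{\Gamma\setminus\{v\}} *_{W_{\mathrm{Lk}(v)}} W_{\mathrm{St}(v)}$ holds classically for parabolic subgroups of Coxeter groups, and the same two ingredients deduce $\asdim W_\Gamma \le Sim(\Gamma)$ from Conjecture~\ref{Artin.Q1.2} for free-of-infinity Coxeter groups.

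The main obstacle is securing the $\asdim C+1$ form of the amalgamation bound and pairing it with a splitting whose edge group has clique number exactly one below $Sim(\Gamma)$; the star splitting is precisely what achieves the latter, since adjoining the cone vertex $v$ to any clique of its link strictly increases the size. A secondary point to verify carefully is that the decomposition is genuinely visual — that there are no edges from $v$ to the complement of its star — so that the amalgamation applies and the edge group is exactly $A_{\mathrm{Lk}(v)}$ rather than a larger parabolic.
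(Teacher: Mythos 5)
Your proof is correct, and it takes a genuinely cleaner route than the paper's own argument. Both proofs share the same skeleton---split $A_\Gamma$ as an amalgam of proper standard parabolics, apply a bound of the form $\asdim(A\ast_C B)\le\max\{\asdim A,\asdim B,\asdim C+1\}$, and induct with complete graphs as the base where the free-of-infinity hypothesis is invoked---but they differ in the choice of splitting and in the induction scheme. The paper's proof (Theorem \ref{Artin.T5.1}) runs a double induction, on $Sim(\Gamma)$ and, nested inside it, on $\sharp V(\Gamma)$; to force the edge group's clique number below $Sim(\Gamma)$ it lists the maximal cliques of $\Gamma$, distinguishes the subcases of one versus several such cliques, and in the second subcase deletes a carefully chosen vertex from each maximal clique missing $v_1$, so that the edge group $A_{\Gamma\setminus\{v_1,\dots,v_\sigma\}}$ has smaller clique number while the two vertex groups are controlled by the two different inductions. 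Your star splitting $A_\Gamma=A_{\Gamma\setminus\{v\}}\ast_{A_{\mathrm{Lk}(v)}}A_{\mathrm{St}(v)}$ gets the crucial drop for free---any clique of $\mathrm{Lk}(v)$ cones off with $v$, so $Sim(\mathrm{Lk}(v))\le Sim(\Gamma)-1$---and this is exactly what collapses the whole argument to a single induction on $\sharp V(\Gamma)$ with no case analysis on maximal cliques; it even absorbs disconnected graphs (an isolated $v$ just gives a free product with trivial edge group), so you never need the separate reduction via Theorem \ref{Artin.T2.2}. Two small corrections of attribution and bookkeeping: the amalgamation inequality in the refined form you need is not what you should cite from \cite{BD08}; it is Dranishnikov's theorem \cite{Dra08}, or equivalently the specialization of the paper's Theorem \ref{Artin.T2.1} (from \cite{PT}) to a graph of groups with a single edge, so cite one of those instead. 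Also, you are right that the splitting requires injectivity of standard parabolic subgroups (van der Lek for Artin groups, Bourbaki for Coxeter groups) for the pushout of presentations to be an honest amalgamated product; the paper treats this as immediate from the presentation, so your explicit flagging of it is a point of extra care rather than a gap.
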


We recall that an Artin (Coxeter) group is \textit{free of infinity} if the corresponding graph is complete. Using theorem \ref{Artin.T1.3} we show that conjecture \ref{Artin.Q1.2} is true.  

\begin{thm}\label{Artin.T1.4}
Let $W_\Gamma $ be a Coxeter group. Then
 \begin{center}
 $asdim W_\Gamma  \leq Sim(\Gamma)$.
\end{center}
\end{thm}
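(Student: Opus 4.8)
The plan is to obtain Theorem~\ref{Artin.T1.4} as a direct consequence of the reduction Theorem~\ref{Artin.T1.3} combined with the classical Januszkiewicz bound recalled in the introduction. By Theorem~\ref{Artin.T1.3}, in order to establish Conjecture~\ref{Artin.Q1.2} for \emph{all} Coxeter groups it suffices to verify it for all \emph{free of infinity} Coxeter groups, that is, for those $W_\Gamma$ whose defining graph $\Gamma$ is complete. So the entire argument reduces to checking the single inequality $\asdim W_\Gamma \le Sim(\Gamma)$ in the special case where $\Gamma$ is a complete graph.

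First I would record the elementary combinatorial observation that if $\Gamma$ is the complete graph on $n = \sharp V(\Gamma)$ vertices, then $\Gamma$ is itself the $1$-skeleton of the standard $(n-1)$-simplex $\Delta^{n-1}$, and it contains no larger such skeleton; hence $Sim(\Gamma) = n = \sharp V(\Gamma)$. Next I would invoke Januszkiewicz's isometric embedding theorem (see \cite{TJ}), which gives the uniform bound $\asdim W_\Gamma \le \sharp V(\Gamma)$ valid for every Coxeter group. Specializing this to the complete graph and substituting the previous identity yields $\asdim W_\Gamma \le \sharp V(\Gamma) = Sim(\Gamma)$, which is precisely Conjecture~\ref{Artin.Q1.2} for free of infinity Coxeter groups. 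Having verified the hypothesis of Theorem~\ref{Artin.T1.3} in the Coxeter case, I would conclude that Conjecture~\ref{Artin.Q1.2} holds for all Coxeter groups, which is exactly the statement of Theorem~\ref{Artin.T1.4}.

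The noteworthy feature of this argument is that, for Coxeter groups, the base case of the reduction comes essentially for free: the Januszkiewicz bound $\asdim W_\Gamma \le \sharp V(\Gamma)$ is only wasteful when $Sim(\Gamma) < \sharp V(\Gamma)$, and for complete graphs these two quantities coincide. Consequently the genuine difficulty has already been absorbed into Theorem~\ref{Artin.T1.3}, whose proof (built on Theorem~\ref{Artin.T1.1}) carries out the geometric work of passing from the complete-graph case to an arbitrary defining graph $\Gamma$. I do not expect any analytic estimate to be needed in the present theorem; the only point requiring attention is ensuring that the hypothesis of Theorem~\ref{Artin.T1.3} is read correctly, namely that it is the free of infinity groups (complete $\Gamma$) for which the conjecture must be checked, and that Januszkiewicz's bound indeed supplies exactly this case.
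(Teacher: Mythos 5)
Your proposal is correct and follows exactly the paper's own argument: the paper likewise verifies the free of infinity case by combining Januszkiewicz's bound $\asdim W_\Gamma \le \sharp V(\Gamma)$ with the observation that $\sharp V(\Gamma) = Sim(\Gamma)$ for complete graphs, and then applies the reduction theorem (Theorem~\ref{Artin.T1.3}) to conclude for all Coxeter groups. No gaps; this is the same proof.
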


Since $Sim(\Gamma) \leq ch( N(W_{\Gamma}  )$ and there are graphs such that the inequality is strict, theorem \ref{Artin.T1.4} improves the previously best known upper bound.\\

By a finite simplicial \emph{G-labeled graph} we mean a finite simplicial graph $\Gamma$ such that every
edge $[a,b]$ is labeled by a cyclically reduced word $r_{a,b}$ (in terms of letters-verices $a,b$). The word  $r_{a,b}$ is neither of the form $b^k a b^\lambda$ nor $a^k b a^\lambda$.

The \emph{Graph group} associated to a finite simplicial G-labeled graph $\Gamma$ is the group $G_\Gamma$ given by the following presentation:
\begin{center}
$G_\Gamma= \langle a,b  \vert r_{a,b}  $ when $a,b$ are connected by an edge $ \rangle $.
\end{center}
Obviously, Graph groups generalize Artin groups.  
  
\begin{prop}\label{Artin.P3.2.} Let $\Gamma$ be a finite simplicial G-graph such that $Sim(\Gamma)=2$. We further assume that $H_1(\Gamma, \mathbb{Z} )=0$. Then 
\begin{center}
$asdim\,G_\Gamma \leq 2 .$
\end{center}
\end{prop}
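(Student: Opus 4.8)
The plan is to realise $G_\Gamma$ as the fundamental group of a graph of groups over a tree whose edge groups are all infinite cyclic, and then to apply the Bell--Dranishnikov estimate for groups acting on trees. First I would unpack the hypothesis $H_1(\Gamma,\mathbb{Z})=0$: for a finite simplicial graph this forces $\Gamma$ to be a forest (it has no cycles), hence in particular it is triangle-free, so $Sim(\Gamma)=2$ merely records that $\Gamma$ has at least one edge. Writing $\Gamma=\Gamma_1\sqcup\cdots\sqcup\Gamma_k$ for its connected components, no relation of $G_\Gamma$ involves generators from two distinct components, so $G_\Gamma=G_{\Gamma_1}\ast\cdots\ast G_{\Gamma_k}$; since $\asdim(A\ast B)\le\max\{\asdim A,\asdim B,1\}$, it suffices to bound each factor, and so from now on I would assume $\Gamma$ is a tree.

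Next I would build the graph of groups. Form the bipartite tree $\mathcal T$ whose vertex set is $V(\Gamma)\cup E(\Gamma)$, joining $v\in V(\Gamma)$ to $e\in E(\Gamma)$ exactly when $v$ is an endpoint of $e$; as $\Gamma$ is a tree, $\mathcal T$ is a tree. I would assign to each $v\in V(\Gamma)$ the group $\langle v\rangle\cong\mathbb{Z}$, to each edge $e=[a,b]$ the two-generator one-relator group $G_e=\langle a,b\mid r_{a,b}\rangle$, and to each incidence edge of $\mathcal T$ the cyclic edge group $\langle v\rangle$, included into $G_e$ by $v\mapsto v$. Because $r_{a,b}$ is cyclically reduced and involves both letters, the Freiheitssatz shows that $a$ and $b$ each have infinite order in $G_e$, so every inclusion $\langle v\rangle\hookrightarrow G_e$ is injective and $\mathcal T$ is a genuine graph of groups. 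Reading off its presentation identifies the fundamental group with $\langle V(\Gamma)\mid r_{a,b},\ e\in E(\Gamma)\rangle=G_\Gamma$, and by Bass--Serre theory every vertex group embeds; in particular each vertex generator has infinite order in $G_\Gamma$.

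For the dimension bound I would peel leaves of $\Gamma$ and induct on $|V(\Gamma)|$. If $b$ is a leaf with unique neighbour $a$ along $e=[a,b]$ and $\Gamma'=\Gamma\setminus\{b\}$, then $G_\Gamma=G_{\Gamma'}\ast_{\langle a\rangle}G_e$, the amalgamation being over the infinite cyclic group $\langle a\rangle$, which injects into $G_e$ by the Freiheitssatz and into $G_{\Gamma'}$ because $a$ is a vertex generator of the tree of groups attached to $\Gamma'$. The Bell--Dranishnikov estimate for amalgamated products (see \cite{BD08}) then gives
\[
\asdim G_\Gamma\ \le\ \max\{\asdim G_{\Gamma'},\ \asdim G_e,\ \asdim\langle a\rangle+1\}\ =\ \max\{\asdim G_{\Gamma'},\ \asdim G_e,\ 2\},
\]
where the decisive point is that the ``$+1$'' is incurred only on the cyclic edge group $\langle a\rangle$, never on the one-relator factor $G_e$. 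By induction $\asdim G_{\Gamma'}\le 2$, with base case $\langle a\rangle\cong\mathbb{Z}$ of asymptotic dimension $1$.

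The main obstacle is the input $\asdim G_e\le 2$ for the two-generator one-relator vertex groups, which is where the genuine geometry enters; here I would quote the bound that one-relator groups have asymptotic dimension at most $2$ from \cite{PT}. Granting this, the induction closes and yields $\asdim G_\Gamma\le 2$. The remaining points require only routine care: the injectivity of every amalgamating subgroup is handled uniformly by the Freiheitssatz together with Bass--Serre theory, and one must simply observe that leaf-peeling always amalgamates over a cyclic group, so the penalty term $+1$ is never applied to a one-relator factor.
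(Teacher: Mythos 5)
Your proposal is correct and takes essentially the same route as the paper: both realise $G_\Gamma$ (for $\Gamma$ a tree) as the fundamental group of a tree of groups whose vertex groups are the two-generator one-relator groups $G_e$ (plus cyclic groups), whose edge groups are infinite cyclic $\langle v\rangle$ with injectivity supplied by the Freiheitssatz, and both then combine the bound $\asdim \leq 2$ for one-relator groups (Theorem \ref{Artin.T2.3}, i.e.\ the result of \cite{PT}) with the graph-of-groups estimate so that the ``$+1$'' penalty falls only on the cyclic edge groups. The only cosmetic difference is that you unroll the paper's single application of Theorem \ref{Artin.T2.1} into a leaf-peeling induction using the amalgamated-product case of that estimate.
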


\section{Preliminaries.}
The asymptotic dimension $asdimX$ of a metric space $X$ is defined as follows: $asdimX \leq n$ if and only if for every $R > 0$ there exists a uniformly bounded covering $\mathcal{U}$ of $X$ such that the R-multiplicity of $\mathcal{U}$ is smaller than or equal to $n+1$ (i.e. every R-ball in $X$ intersects at most $n+1$ elements of $\mathcal{U}$).

There are many equivalent ways to define the asymptotic dimension of a metric space. It turns out that the asymptotic dimension of an infinite tree is $1$ and the asymptotic dimension of $\mathbb{E}^{n}$ is $n$.
\begin{flushleft}
By a finite simplicial labeled graph we mean a finite simplicial graph $\Gamma$ such that every 
edge $[a,b]$ is labeled by a natural number $m_{ab}>1$. The Artin group
associated to $\Gamma$ is the group $A_\Gamma$ given by the following presentation:
\end{flushleft}

\begin{center}
$A_\Gamma= \langle a,b \in V(\Gamma) \vert  \underbrace{ab \ldots}_{m_{ab}}= \underbrace{ba \ldots}_{m_{ab}} $ when a, b are connected by an edge $ \rangle $.

\end{center}
\textbf{Convention.} We will use the same symbol $m_{ab}$ both for the label of the edge $[a,b]$ and the relation  $\underbrace{ab \ldots}_{m_{ab}} \cdot \underbrace{a^{-1}b^{-1} \ldots}_{m_{ab}}$ of the group.\\

We call an Artin group\textit{ Right-angled Artin group} (RAAG) if $m_{ab}=2$ when a, b are connected by an edge. We call an Artin group \textit{free of infinity} if the corresponding graph is complete. An Artin group is of \emph{large type} if every label $m_{ab}$ is at least three.

\begin{flushleft}
The Coxeter group
associated to $\Gamma$ is the group $W_\Gamma$ given by the following presentation:
\end{flushleft}

\begin{center}
$W_\Gamma= \langle V(\Gamma) \vert a^2 $ for all $a \in V(\Gamma)$ and $  \underbrace{ab \ldots}_{m_{ab}}= \underbrace{ba \ldots}_{m_{ab}} $ when a, b are connected by an edge $ \rangle $.

\end{center}
A Coxeter group is called\textit{ Right-angled Coxeter group} (RACG) if $m_{ab}=2$ when a, b are connected by an edge. We call a Coxeter group \textit{free of infinity} if the corresponding graph is complete.\\

We say a simplicial graph is \emph{complete} or equivalently \emph{a clique} if any two vetrices are connected by an edge. An \emph{n-clique} is the complete graph on n vertices. There is another equivalent way to define $Sim(\Gamma)$, in particular, it can be viewed as the larger natural number $n$ such that $\Gamma$ contains a complete subgraph $\Gamma^{\prime}$ with $n$ vertices.\\
We recall that the \emph{full subgraph} defined by a subset $V$ of the vertices of a graph $\Gamma$ is a subgraph of $\Gamma$ formed from $V$ and from all of the edges of $\Gamma$ which have both endpoints in the subset $V$.

The following theorem is proved by the author in $\cite{PT}$
\begin{thm}\label{Artin.T2.1}
Let $(\mathbb{G}, Y)$ be a finite graph of groups with vertex groups $\lbrace G_{v} \mid v \in Y^{0} \rbrace$ and edge groups $\lbrace G_{e} \mid e \in Y^{1}_{+} \rbrace$. Then the following inequality holds:
\begin{center}
$asdim\,\pi_{1}(\mathbb{G},Y,\mathbb{T})  \leq max_{v \in Y^{0} ,e \in Y^{1}_{+}} \lbrace asdim G_{v}, asdim\,G_{e} +1 \rbrace.$
\end{center}

\end{thm}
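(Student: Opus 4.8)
The plan is to prove the inequality by combining Bass--Serre theory with a Hurewicz--type union argument, organised as an induction along the graph $Y$. Write $N=\max_{v\in Y^{0},\,e\in Y^{1}_{+}}\{\asdim G_{v},\,\asdim G_{e}+1\}$ for the right-hand side, and set $G=\pi_{1}(\mathbb{G},Y,\mathbb{T})$. The starting point is that $G$ acts properly and cocompactly on the associated Bass--Serre tree $T$ (not to be confused with the chosen maximal subtree $\mathbb{T}\subseteq Y$), with vertex stabilisers the conjugates of the $G_{v}$ and edge stabilisers the conjugates of the $G_{e}$; in particular $G$ is quasi-isometric to the corresponding tree of spaces $X\to T$, so it suffices to bound $\asdim X$. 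I will repeatedly use that $\asdim T=1$, together with the fact that at every scale $D$ a simplicial tree admits a cover by two $D$-disjoint, uniformly bounded families of subtrees.

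First I would isolate the two elementary cases from which everything else is assembled: the amalgamated product $A*_{C}B$ and the HNN extension $A*_{C}$. For each I would build, at a prescribed scale $R$, an explicit uniformly bounded cover of the tree of spaces as follows. Choose $D\gg R$ and pull back a two-family scale-$D$ cover of $T$ to $X$ via the projection $\pi$; since $\pi$ is $1$-Lipschitz, the preimages of $D$-separated subtrees are again $D$-separated, hence $R$-disjoint, in $X$. Each preimage piece projects to a bounded subtree and is covered, \emph{equivariantly}, by translates of a fixed cover of the vertex space realising $\asdim A$ (resp.\ $\asdim B$); this keeps the ``vertex'' multiplicity at $\max\{\asdim A,\asdim B\}+1$. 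The only places where two vertex blocks are forced to interact are the interfaces lying over the cut edges, and such an interface is coarsely a translate of $C$; covering a neighbourhood of the interfaces by a cover realising $\asdim C$ costs multiplicity $\asdim C+1$, and the extra edge-direction of the tree contributes the single $+1$. Choosing $D\gg R$ guarantees that an $R$-ball meets either only vertex blocks or only one edge corridor, so the resulting $R$-multiplicity is at most
\begin{center}
$\max\{\max\{\asdim A,\asdim B\}+1,\;(\asdim C+1)+1\}=\max\{\asdim A,\asdim B,\asdim C+1\}+1$.
\end{center}
This yields $\asdim(A*_{C}B)\le\max\{\asdim A,\asdim B,\asdim C+1\}$ and, by the same argument with a single vertex block, $\asdim(A*_{C})\le\max\{\asdim A,\asdim C+1\}$.

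With the base cases in hand I would run the induction. Fix the maximal subtree $\mathbb{T}\subseteq Y$ and build $G$ from the vertex groups by first forming the tree of groups over $\mathbb{T}$ as an iterated amalgamated product (adding the edges of $\mathbb{T}$ one at a time, each step amalgamating the group built so far with a new vertex group over the corresponding edge group) and then performing one HNN extension for each edge of $Y$ not in $\mathbb{T}$. I claim every intermediate group $G_{i}$ satisfies $\asdim G_{i}\le N$: the base groups are the $G_{v}$ with $\asdim G_{v}\le N$, and each amalgamation or HNN move takes groups of asymptotic dimension $\le N$ and an edge group with $\asdim G_{e}+1\le N$ to a group of asymptotic dimension $\le\max\{N,\asdim G_{e}+1\}=N$, by the base-case inequalities. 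After all edges are added we obtain $\asdim G\le N$, which is the assertion.

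The hard part will be the base-case cover, and the main obstacle there is the possibly \emph{infinite valence} of the Bass--Serre tree: when an edge group has infinite index, a bounded subtree of $T$ already carries infinitely many vertex spaces, so the finite union theorem does not apply to $\pi^{-1}$ of a bounded subtree, and one cannot naively treat each vertex block in isolation. The resolution I would pursue is that the edge spaces incident to a single vertex space $X_{v}$ are exactly the cosets of $C$ inside (a conjugate of) $A$, whose large-scale arrangement is itself controlled by $\asdim A$; combining this with the infinite union theorem---applied to the coset-neighbourhoods, which become $R$-disjoint once a lower-dimensional ``core'' is removed---is the technical heart of the argument. The remaining care is bookkeeping: making every cover equivariant and uniformly bounded, and verifying throughout that the vertex and edge contributions enter the multiplicity count through a maximum rather than a sum, so that no step inflates the bound beyond $N$.
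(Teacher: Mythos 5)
Your overall architecture is sound, and it is worth noting that it differs from the source: the paper does not prove Theorem \ref{Artin.T2.1} at all but quotes it from \cite{PT}, where it is established directly for graphs of groups by a cover construction on the Bass--Serre tree of spaces. You instead factor $\pi_{1}(\mathbb{G},Y,\mathbb{T})$ into iterated amalgams along the maximal tree $\mathbb{T}$ followed by one HNN extension per remaining edge, and that reduction is valid: each intermediate group enters the next step as a ``vertex'' entry, each edge group enters with its $+1$, and since the bound is a maximum nothing inflates, so $\asdim G \leq N$ follows once the two base cases are known. The amalgam base case, $\asdim(A\ast_{C}B)\leq\max\{\asdim A,\asdim B,\asdim C+1\}$, is exactly Dranishnikov's theorem \cite{Dra08} (which this paper itself invokes as an alternative in the proof of Theorem \ref{Artin.T3.2}), so you could simply quote it rather than reprove it.

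The genuine gap is in the base cases themselves, above all the HNN bound $\asdim(A\ast_{C})\leq\max\{\asdim A,\asdim C+1\}$: this is not available in the pre-\cite{PT} literature --- the tree theorem of Bell--Dranishnikov \cite{BD04} gives only $\max\{\asdim G_{v},\asdim G_{e}\}+1$, with the $+1$ applied to the vertex groups as well --- so your argument must actually deliver it, and the sketch does not. You correctly identify the obstruction (when $C$ has infinite index, a bounded subtree of the Bass--Serre tree carries infinitely many vertex spaces, so the finite union theorem is useless there), but your proposed resolution, that the coset-neighbourhoods ``become $R$-disjoint once a lower-dimensional core is removed,'' is not a mechanism: distinct cosets $aC$, $a'C$ in $A$ can remain uniformly close along arbitrarily large subsets, and no removal of a core makes them $R$-disjoint. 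What the known proofs actually require is the statement that the family $\lbrace aC : a\in A\rbrace$ satisfies $\asdim\leq\asdim C$ \emph{uniformly}, combined with a saturated-union lemma producing, at every scale $R$, covers of $A$ that control $A$ and all coset neighbourhoods simultaneously without raising multiplicity; this is the key lemma of \cite{Dra08} and of its generalization in \cite{PT}, and it is precisely the step your outline defers. One further slip to correct: $G$ does \emph{not} act properly on the Bass--Serre tree --- the vertex stabilizers are the (generally infinite) conjugates of the $G_{v}$, and a proper cocompact action on a tree would force $G$ to be virtually free; the statement you actually need, and use, is only that $G$ is quasi-isometric to the associated tree of spaces.
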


 We also need a theorem for free products from \cite{BDK}, see also \cite{BD04}.
\begin{thm}\label{Artin.T2.2}
Let $A$, $B$ be two finitely generated groups. Then:
\begin{center}
$asdim\,A \ast B  = max \lbrace asdim A, asdim B, 1 \rbrace.$
\end{center}

\end{thm}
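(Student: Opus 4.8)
The plan is to prove the two inequalities separately: the upper bound will come straight from the graph-of-groups estimate of Theorem \ref{Artin.T2.1}, while the lower bound will follow from monotonicity of asymptotic dimension under subspaces together with the fact that a nontrivial free product is infinite. Throughout I assume, as the formula requires, that both $A$ and $B$ are nontrivial; the degenerate case (say $A=\{1\}$) collapses to $A\ast B\cong B$ and is excluded from the statement.

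For the upper bound I would realize $A\ast B$ as the fundamental group of the simplest possible graph of groups: let $Y$ be the segment with vertices $u,v$ and a single edge $e$, and put $G_u=A$, $G_v=B$, $G_e=\{1\}$, with the two edge-inclusions trivial. The Bass--Serre description of a free product gives $\pi_{1}(\mathbb{G},Y,\mathbb{T})\cong A\ast B$. Since $asdim\{1\}=0$, Theorem \ref{Artin.T2.1} immediately yields
\[
asdim(A\ast B)\le \max\{asdim\,A,\ asdim\,B,\ asdim\{1\}+1\}=\max\{asdim\,A,\ asdim\,B,\ 1\}.
\]

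For the lower bound I would proceed in two steps. First, $A$ and $B$ are free factors of $A\ast B$; taking the generating set $S_{A}\sqcup S_{B}$, the retraction $A\ast B\to A$ that kills $B$ is $1$-Lipschitz and splits the inclusion, so $A$ (and likewise $B$) embeds isometrically, and monotonicity of asymptotic dimension under subspaces gives $asdim(A\ast B)\ge asdim\,A$ and $asdim(A\ast B)\ge asdim\,B$. Second, because $A$ and $B$ are nontrivial the group $A\ast B$ is infinite, and an infinite finitely generated group has asymptotic dimension at least $1$ (an $asdim=0$ space splits, for every $R$, into uniformly bounded and $R$-separated clusters, which a connected Cayley graph cannot do). Combining these three estimates gives $asdim(A\ast B)\ge\max\{asdim\,A,\ asdim\,B,\ 1\}$, and together with the upper bound this proves the equality.

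With Theorem \ref{Artin.T2.1} in hand the upper bound is essentially free, so the only points demanding care are bookkeeping ones: verifying the isometric embedding of the free factors and, above all, the role of the constant $1$ in the maximum, whose lower-bound half genuinely needs both factors nontrivial. I expect the real obstacle to be hidden rather than absent: the substantive content of the upper bound is exactly what Theorem \ref{Artin.T2.1} black-boxes, namely a Hurewicz-type union estimate for the action of $A\ast B$ on its Bass--Serre tree (which has asymptotic dimension $1$, with trivial edge stabilizers and vertex stabilizers conjugate to $A$ or $B$); were Theorem \ref{Artin.T2.1} unavailable, reproving that estimate would be the main work.
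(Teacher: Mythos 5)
Your proof is correct, but there is no internal proof to compare it with: the paper does not prove Theorem \ref{Artin.T2.2} at all, it imports it as a known result of Bell, Dranishnikov and Keesling \cite{BDK} (see also \cite{BD04}). What you have written is in effect a self-contained re-derivation of that theorem from the paper's own toolkit. Your upper bound is exactly the specialization of Theorem \ref{Artin.T2.1} to the one-edge graph of groups with trivial edge group, and this creates no circularity within the paper, since Theorem \ref{Artin.T2.1} is proved independently in \cite{PT} by a Hurewicz-type argument (historically the logic ran the other way: \cite{BDK} predates \cite{PT}, whose graph-of-groups estimate generalizes the free-product and amalgam bounds of Bell--Dranishnikov); so the only part of \cite{BDK} your argument actually reproduces on its own is the lower bound. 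That lower bound is carried out correctly: with generating set $S_A \sqcup S_B$ the retraction $A \ast B \to A$ is $1$-Lipschitz and splits the inclusion, so the free factors are isometrically embedded and monotonicity of $asdim$ under subspaces applies, and the observation that an infinite finitely generated group has $asdim \geq 1$ is the standard one (your sketch via $R$-disjoint uniformly bounded clusters versus connectedness of the Cayley graph is sound). One point of yours deserves emphasis rather than correction: the nontriviality caveat is genuinely needed and is silently omitted from the statement as quoted in the paper, since if $A$ is trivial and $B$ is a nontrivial finite group then $asdim\,A \ast B = asdim\,B = 0$ while the right-hand side of the formula is $1$; reading the theorem with both factors nontrivial, as you do, is the correct convention, and in the paper's applications (Theorem \ref{Artin.T2.2} is invoked to reduce to connected defining graphs, where the free factors are parabolic subgroups on nonempty vertex sets, hence infinite) the degenerate case never arises.
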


We recall also the following (see \cite{PT}) 

\begin{thm}\label{Artin.T2.3} 
 \textit{Let $G$ be a finitely generated one relator group. Then one of the following is true}:\\
\textbf{(i)} \textit{$G$ is finite cyclic, and $asdim\,G = 0$} \\
\textbf{(ii)} \textit{$G$ is a nontrivial free group or a free product of a nontrivial free group and a finite cyclic group, and $asdim\,G = 1$}\\
\textbf{(iii)} \textit{$G$ is an infinite freely indecomposable not cyclic group or a free product of a nontrivial free group and an infinite freely indecomposable not cyclic group, and $asdim\,G = 2$.}
\end{thm}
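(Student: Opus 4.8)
The plan is to exhibit $G_\Gamma$ as the fundamental group of a graph of groups over a \emph{tree}, whose vertex groups are two-generator one-relator groups and whose edge groups are infinite cyclic, and then to feed this decomposition into Theorem \ref{Artin.T2.1}. First I would use the homological hypothesis: since $H_1(\Gamma,\mathbb{Z})=0$, the graph $\Gamma$ has no cycles, so it is a forest; in particular it is triangle-free, which is consistent with $Sim(\Gamma)=2$ (the latter only records that $\Gamma$ has at least one edge). Writing $\Gamma$ as the disjoint union of its tree components $T_1,\dots,T_k$ together with its isolated vertices, the group $G_\Gamma$ splits as a free product $G_{T_1}\ast\cdots\ast G_{T_k}\ast F$, where $F$ is the free group on the isolated vertices. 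Since $asdim\,F\leq 1$, Theorem \ref{Artin.T2.2} reduces the problem to bounding $asdim\,G_{T_j}\leq 2$ for each tree component, so I may assume $\Gamma=T$ is a single tree with at least one edge.

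Next I would decompose $G_T$ along the tree. Order the edges $e_1,\dots,e_n$ of $T$ so that each $e_i$ (for $i\geq 2$) meets $e_1\cup\cdots\cup e_{i-1}$ in exactly one vertex $a_i$, the other endpoint $b_i$ being new; such an ordering exists because $T$ is a tree. Setting $H_1=\langle a_1,b_1\mid r_{e_1}\rangle$ and $H_i=H_{i-1}\ast_{\langle a_i\rangle}\langle a_i,b_i\mid r_{e_i}\rangle$, one obtains $H_n=G_T$. Equivalently, and more convenient for applying Theorem \ref{Artin.T2.1}, I would package this as a single graph of groups whose underlying graph $Y$ is a tree: introduce one vertex $u_e$ with group $\langle a,b\mid r_{a,b}\rangle$ for each edge $e=[a,b]$ of $T$, one vertex $p_v$ with group $\langle v\rangle\cong\mathbb{Z}$ for each vertex $v$ of $T$ of degree at least two, and join $u_e$ to $p_v$ by an edge with edge group $\langle v\rangle$ whenever $v$ is an endpoint of $e$. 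The incidence structure of a tree has no cycles, so $Y$ is again a tree, and collapsing the $\mathbb{Z}$-vertices recovers exactly the presentation $\langle V(T)\mid r_e,\ e\in E(T)\rangle=G_T$.

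Granting that this is a genuine graph of groups, the bound is immediate from Theorem \ref{Artin.T2.1}. Each vertex group $\langle a,b\mid r_{a,b}\rangle$ is a finitely generated one-relator group, so $asdim\leq 2$ by Theorem \ref{Artin.T2.3}; each vertex group $\langle v\rangle$ and each edge group $\langle v\rangle$ is infinite cyclic, hence of asymptotic dimension $1$. Thus the right-hand side of Theorem \ref{Artin.T2.1} is $\max\{2,\,1,\,1+1\}=2$, giving $asdim\,G_T\leq 2$ and therefore $asdim\,G_\Gamma\leq 2$.

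The main point to verify — and the step I expect to be the real obstacle — is that the decomposition above is legitimate, i.e.\ that every amalgamation is taken over an \emph{infinite cyclic} subgroup. Concretely I must show that for each edge $e=[a,b]$ the vertex generators $a$ and $b$ have infinite order in $\langle a,b\mid r_{a,b}\rangle$, so that the edge groups $\langle v\rangle$ really inject as $\mathbb{Z}$, and that the copies of $\langle v\rangle$ seen from the several edges incident to a common vertex $v$ are identified consistently. The first of these is where the hypotheses on the labels are used: since $r_{a,b}$ is cyclically reduced and involves both letters, the Freiheitssatz of Magnus gives that $\langle a\rangle$ and $\langle b\rangle$ are free of rank one, while excluding the forms $a^k b a^\lambda$ and $b^k a b^\lambda$ prevents a generator from being trivialised or expressed as a power of the other, so no vertex group degenerates and each $\langle v\rangle$ is honestly $\mathbb{Z}$. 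The consistency of the identifications is built into the construction, since the single group $G_{p_v}=\langle v\rangle$ is shared by all edges meeting at $v$. Once these injectivity and compatibility checks are in place, the three cited theorems assemble into the stated bound.
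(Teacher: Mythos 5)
You have proved the wrong statement. The theorem you were asked to prove is Theorem \ref{Artin.T2.3}, a trichotomy for arbitrary finitely generated one-relator groups: the algebraic classification into (i) finite cyclic, (ii) free or (nontrivial free)$\ast$(finite cyclic), (iii) containing an infinite freely indecomposable non-cyclic free factor, together with the \emph{exact} values $asdim\,G=0,1,2$ in the respective cases. What you have written is instead a proof of Proposition \ref{Artin.P3.2} (the bound $asdim\,G_\Gamma\leq 2$ for graph groups over a graph with $Sim(\Gamma)=2$ and $H_1(\Gamma,\mathbb{Z})=0$) --- and, fatally for the present purpose, your argument \emph{invokes Theorem \ref{Artin.T2.3} itself} to bound the two-generator one-relator vertex groups. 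As a proof of Theorem \ref{Artin.T2.3} this is circular, and it also addresses none of the theorem's actual content: a general one-relator group has one relator on arbitrarily many generators and is not of your graph-group form; your argument produces no trichotomy; and it contains no lower-bound arguments at all, whereas the equalities in (ii) and (iii) require showing $asdim\,G\geq 1$ and $asdim\,G\geq 2$ respectively, not just upper bounds.

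For comparison: the paper does not prove Theorem \ref{Artin.T2.3} here at all --- it is recalled from \cite{PT}, where the upper bound $asdim\,G\leq 2$ is obtained by running the Magnus--Moldavanskii hierarchy (a one-relator group splits as an HNN extension with a one-relator vertex group of shorter relator and free Magnus edge groups) through the graph-of-groups inequality of Theorem \ref{Artin.T2.1}, and the trichotomy with exact values needs additional input, e.g.\ Grushko decomposition together with the fact that a finitely presented group of asymptotic dimension one is virtually free, to rule out intermediate values in case (iii). None of that machinery appears in your proposal. Ironically, your tree-of-groups decomposition, with the Freiheitssatz used to certify that the vertex generators $\langle v\rangle$ are honestly infinite cyclic, is essentially identical to the paper's own proof of Proposition \ref{Artin.P3.2}; so the work is sound, but it is aimed at a different statement and cannot stand as a proof of the one assigned.
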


\section{The case $Sim(\Gamma)=2.$}
Let $\Gamma$ be a finite simplicial labeled graph, such that $Sim(\Gamma)=2$. We distinguish two cases, either the graph contains a cycle or not. We will handle these cases separately.

\begin{prop}\label{Artin.P3.1} Let $\Gamma$ be a finite simplicial labeled graph, such that $Sim(\Gamma)=2$. We further assume that $H_1(\Gamma, \mathbb{Z} )=0$, then 
\begin{center}
$asdim\,A_\Gamma=2 $ and $asdim\,W_\Gamma \leq 1 $.
\end{center}

\end{prop}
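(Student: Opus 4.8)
The hypothesis $H_1(\Gamma,\mathbb{Z})=0$ means precisely that $\Gamma$ is a forest, so the plan is to realize $A_\Gamma$ (resp.\ $W_\Gamma$) as the fundamental group of a graph of groups over a tree and then apply Theorem~\ref{Artin.T2.1}. First I would reduce to the connected case: writing $\Gamma$ as the disjoint union of its tree components $\Gamma_1,\dots,\Gamma_k$ together with its isolated vertices, the absence of edges between distinct components gives $A_\Gamma\cong A_{\Gamma_1}\ast\cdots\ast A_{\Gamma_k}$, and likewise for $W_\Gamma$. By Theorem~\ref{Artin.T2.2} (iterated) it then suffices to bound the asymptotic dimension of each $A_{\Gamma_i}$ with $\Gamma_i$ a tree, an isolated vertex contributing only a $\mathbb{Z}$ (resp.\ $\mathbb{Z}/2$) free factor of asymptotic dimension $1$ (resp.\ $0$).

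Assuming now that $\Gamma$ is a tree, I would pass to its barycentric subdivision $\Gamma'$, which is again a tree, and equip $\Gamma'$ with a graph-of-groups structure $(\mathbb{G},\Gamma')$ as follows: to the barycenter of a vertex $v$ assign the cyclic group $\langle v\rangle$; to the barycenter of an edge $e=[a,b]$ assign the rank-two (dihedral) Artin group $A_e=\langle a,b\mid m_{ab}\rangle$; and to each edge of $\Gamma'$ joining the barycenter of $v$ to the barycenter of an incident edge $e$ assign $\langle v\rangle$, included in the evident way. The visual splitting $A_{\Gamma_1\cup_{\Gamma_0}\Gamma_2}\cong A_{\Gamma_1}\ast_{A_{\Gamma_0}}A_{\Gamma_2}$ along a full subgraph $\Gamma_0$, combined with van der Lek's theorem that standard parabolic subgroups are the expected Artin subgroups, shows by iterating over the tree that $\pi_1(\mathbb{G},\Gamma')\cong A_\Gamma$. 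The identical construction with $\langle v\rangle\cong\mathbb{Z}/2$ and $A_e$ replaced by the finite dihedral Coxeter group $W_e$ yields $W_\Gamma$.

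Applying Theorem~\ref{Artin.T2.1} to $(\mathbb{G},\Gamma')$ then gives the upper bounds almost mechanically. In the Artin case every edge group is infinite cyclic ($\asdim=1$) and every vertex group is either $\mathbb{Z}$ ($\asdim=1$) or a dihedral Artin group; the latter is an infinite, non-cyclic, freely indecomposable one-relator group (it has nontrivial centre), so Theorem~\ref{Artin.T2.3}(iii) gives $\asdim A_e=2$, whence $\asdim A_\Gamma\le\max\{2,\,1+1\}=2$. In the Coxeter case all vertex and edge groups are finite ($\asdim=0$), so the same theorem yields $\asdim W_\Gamma\le 0+1=1$, as claimed.

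Finally I would establish the matching lower bound for the Artin group: since $Sim(\Gamma)=2$ there is at least one edge $e$, and the standard parabolic subgroup $A_e\le A_\Gamma$ is a dihedral Artin group of asymptotic dimension $2$. Because asymptotic dimension is monotone under passing to infinite finitely generated subgroups, $\asdim A_\Gamma\ge 2$, giving equality. I expect the main obstacle to be the justification of the decomposition $\pi_1(\mathbb{G},\Gamma')\cong A_\Gamma$ over the subdivided tree — that is, checking that the visual splittings along the full subgraphs of $\Gamma$ assemble to exactly the tree of groups above, with the correct edge inclusions $\langle v\rangle\hookrightarrow A_e$ — since once the decomposition is in place the asymptotic-dimension bookkeeping is an immediate application of Theorems~\ref{Artin.T2.1}, \ref{Artin.T2.2} and \ref{Artin.T2.3}.
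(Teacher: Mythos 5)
Your proof is correct and follows essentially the same route as the paper: reduce to the connected (tree) case via the free product theorem, realize $A_\Gamma$ (resp.\ $W_\Gamma$) as a tree of groups with dihedral Artin (resp.\ finite) vertex groups and infinite cyclic (resp.\ finite) edge groups, apply Theorems \ref{Artin.T2.1} and \ref{Artin.T2.3} for the upper bounds, and get the lower bound from a dihedral parabolic subgroup together with monotonicity of asymptotic dimension. Your version is in fact somewhat more careful than the paper's, making the graph-of-groups structure explicit via barycentric subdivision and citing van der Lek for the parabolic embeddings, which the paper leaves implicit.
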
 
\begin{proof}
(\emph{Artin groups}) We observe that if $\Gamma$ is not connected, then  $A_\Gamma = \ast_{i=1}^{n} A_{\Gamma_i } $, where $\Gamma_i$ are the connected components of $\Gamma$. By the previous observation and theorem \ref{Artin.T2.2} we may consider only connected graphs. So we assume that $\Gamma$ is connected, thus $\Gamma$ is a tree.

We observe that if $\Gamma$ is a tree, then $A_\Gamma$ is the fundamental group of graph of groups where the vertex groups are one-relator parabolic subgroups of $A_\Gamma$ and the edge groups are infinite cyclic parabolic subgroups. 
By theorem \ref{Artin.T2.1} and theorem \ref{Artin.T2.3} we have that $asdim\,A_\Gamma \leq 2.$.

 Suppose that $A_\Gamma$ is the artin group defined by a single edge. Obviously, $A_\Gamma$ is one relator group. Since $A_\Gamma$ is not a free group and it is torsion free (see \cite{LynSch} page 200), by theorem \ref{Artin.T2.3} we obtain that $asdim\,A_\Gamma =2$. Since every Artin group with $Sim(\Gamma)=2$ contains an one relator parabolic subgroup we conclude that the proposition is true.

(\emph{Coxeter groups}) If $\Gamma$ is a tree, then $W_\Gamma$ is the fundamental group of graph of finite parabolic subgroups. 
By theorem \ref{Artin.T2.1} we have that $asdim\,W_\Gamma \leq 1$.

\end{proof}

In the second case we have that the graph contains a cycle, however, it is possible to contain only cycles of length larger than or equal to four.

\begin{thm}\label{Artin.T3.2} Let $\Gamma$ be a finite simplicial labeled graph, such that $Sim(\Gamma)=2$. Then 
 
\begin{center}
$asdim\,A_\Gamma=2 $ and $asdim\,W_\Gamma \leq 2 $.
\end{center}

\end{thm}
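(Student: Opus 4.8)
The plan is to reduce the general case to the acyclic case already settled in Proposition \ref{Artin.P3.1} by an induction on the number of vertices of $\Gamma$, using a splitting of $A_\Gamma$ (resp. $W_\Gamma$) along the link of a vertex. The crucial feature of the hypothesis $Sim(\Gamma)=2$ is that $\Gamma$ contains no triangles; consequently, for \emph{any} vertex $v$, no two neighbours of $v$ can be joined by an edge (that would form a triangle with $v$). Hence the link $Lk(v)$ is an edgeless full subgraph and the star $St(v)=\{v\}\cup Lk(v)$ is a tree. This is exactly the observation flagged before the statement, that all cycles have length at least four.

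Next I would set up the splitting. For a fixed vertex $v$ we have $\Gamma=St(v)\cup(\Gamma\setminus v)$, where $\Gamma\setminus v$ is the full subgraph on $V(\Gamma)\setminus\{v\}$, and the two full subgraphs meet in the full subgraph $Lk(v)$. Since every neighbour of $v$ lies in $Lk(v)$, there is no edge between $v$ and a vertex of $\Gamma\setminus St(v)$, so $Lk(v)$ is a separating full subgraph. By the standard decomposition of Artin (resp. Coxeter) groups along separating full subgraphs (injectivity of the parabolic subgroups attached to full subgraphs), this yields an amalgamated product
\begin{center}
$A_\Gamma = A_{St(v)} \ast_{A_{Lk(v)}} A_{\Gamma\setminus v}$,
\end{center}
and the analogous decomposition for $W_\Gamma$. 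I expect the verification that this is a genuine graph-of-groups splitting, i.e. that the relevant parabolic subgroups inject and intersect correctly, to be the main technical obstacle; it is where one must appeal to the structural results for Artin groups (van der Lek's theorem) rather than to anything geometric.

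Granting the splitting, the estimates fall out cleanly. The star $St(v)$ is a tree, so $H_1(St(v),\mathbb{Z})=0$ and Proposition \ref{Artin.P3.1} gives $asdim\,A_{St(v)}\le 2$ (and $asdim\,W_{St(v)}\le 1$). The edge group is attached to the edgeless graph $Lk(v)$, so $A_{Lk(v)}$ is a free group and $W_{Lk(v)}$ is a free product of copies of $\mathbb{Z}/2$; in either case it is (virtually) free, whence $asdim\,A_{Lk(v)}\le 1$ and $asdim\,W_{Lk(v)}\le 1$. Finally $\Gamma\setminus v$ has one fewer vertex and still satisfies $Sim\le 2$, so by the inductive hypothesis $asdim\,A_{\Gamma\setminus v}\le 2$ (resp. $asdim\,W_{\Gamma\setminus v}\le 2$). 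Feeding these into Theorem \ref{Artin.T2.1} gives
\begin{center}
$asdim\,A_\Gamma \le \max\{\,asdim\,A_{St(v)},\ asdim\,A_{\Gamma\setminus v},\ asdim\,A_{Lk(v)}+1\,\} \le \max\{2,2,2\}=2$,
\end{center}
and identically $asdim\,W_\Gamma\le \max\{1,2,2\}=2$. (If $\Gamma$ is disconnected one first reduces to connected graphs via Theorem \ref{Artin.T2.2}, exactly as in Proposition \ref{Artin.P3.1}.)

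It remains to record the lower bound in the Artin case. Because $Sim(\Gamma)=2$, the graph $\Gamma$ has at least one edge $[a,b]$, and the corresponding dihedral Artin group $A_{[a,b]}$ sits inside $A_\Gamma$ as a parabolic subgroup. This is a torsion-free, non-free one-relator group, so $asdim\,A_{[a,b]}=2$ by Theorem \ref{Artin.T2.3}; since asymptotic dimension is monotone under passage to subgroups, $asdim\,A_\Gamma\ge 2$. Combined with the upper bound this yields $asdim\,A_\Gamma=2$, while for Coxeter groups we obtain only the stated inequality $asdim\,W_\Gamma\le 2$, completing the argument.
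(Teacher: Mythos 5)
Your proof is correct and is essentially the paper's own argument: the paper splits off exactly the same pieces (its $\Gamma_{v_1}$, $\Gamma_U$, $\Gamma_1$ are your $St(v)$, $Lk(v)$, $\Gamma\setminus v$), uses the triangle-free condition to make the link edgeless and the star a tree, and feeds Proposition \ref{Artin.P3.1} and Theorem \ref{Artin.T2.1} into the same amalgamated product, with the same lower-bound argument via a dihedral parabolic subgroup. The only difference is bookkeeping: the paper inducts on the number of cycles and chooses $v$ on a cycle, whereas you induct on the number of vertices and take $v$ arbitrary, which if anything slightly streamlines the case analysis.
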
 
\begin{proof} Again we may assume that $\Gamma$ is connected. We will use induction on the number $n_C$ of cycles contained in the graph. If $n_C=0$, then by proposition \ref{Artin.P3.1} the statement of this theorem is true. We assume that the statement is true for all $n_C \leq n$. Let $\Gamma$ be a finite simplicial labeled graph such that $n_C =n+1$. By the previous observation we know that the graph contains a cycle $\gamma$ of length $m+1\geq4$.

Let  $\lbrace v_1, \ldots, v_{m+1} \rbrace$ be the vertex set of $\gamma$ and $\lbrace  e_1, \ldots, e_{m+1} \rbrace$ be the edges of the cycle where $e_i = [v_i,v_{i+1}]$. We set $\Gamma_1 = \Gamma \setminus \lbrace  v_1 \rbrace $, obviously this graph contains at least one cycle less than $\Gamma$, thus $asdim\, A_{\Gamma_1 }=2$ ($asdim\, W_{\Gamma_1 } \leq 2$).

Let $U= \lbrace  u_1, \ldots, u_{k}  \rbrace$ be all the vertices of $\Gamma$ such that for each $u_j$ there exists an edge connecting it with $v_1$. It is easy to observe that the full subgraph $\Gamma_U$ of $\Gamma$ formed by $U$  contains no edge. Thus the parabolic subgroup $A_{\Gamma_U }$ ($W_{\Gamma_U }$) of $A_\Gamma$ ($W_\Gamma$) is the free group with $k$ generators $F_k$ (is the virtually free group $\ast_{i=1}^{k} \mathbb{Z}_2$). 

Let $\Gamma_{v_1 }$ be the full subgraph of $\Gamma$ formed by $U \cup \lbrace v_1  \rbrace$, we observe that contains no cycle. It is not hard to see that $\Gamma_{v_1 }$ contains at least an edge, thus $asdim\, A_{\Gamma_{v_1} }=2$ ($asdim\, W_{\Gamma_{v_1} }\leq 1$.).

It is easy to check that the Artin (Coxeter) group $A_\Gamma$ ($W_\Gamma$) is an amalgamated product of its parabolic subgroups  $A_{\Gamma_1 }$ ($W_{\Gamma_1 }$), $A_{\Gamma_U }$ ( $W_{\Gamma_U }$ ) and $ A_{\Gamma_{v_1 }}$ ($ W_{\Gamma_{v_1 }}$ ). To be more precise, the following equalities are a direct consequence of the presentation of $A_\Gamma$ and $W_\Gamma$:

\begin{center}
$A_\Gamma = A_{\Gamma_1 } \underset{A_{\Gamma_U }}{\ast} A_{\Gamma_{v_1 } } $ and $W_\Gamma = W_{\Gamma_1 } \underset{W_{\Gamma_U }}{\ast} W_{\Gamma_{v_1 } }$
\end{center}

Then by theorem \ref{Artin.T2.1} (or using Dranishnikov's theorem (see \cite{Dra08})) we conclude that the theorem holds.
\end{proof}

\subsection{Graph-Groups.}
In fact we may obtain a more general result than proposition \ref{Artin.P3.1}.

By a finite simplicial \emph{G-labeled graph} we mean a finite simplicial graph $\Gamma$ such that every
edge $[a,b]$ is labeled by a cyclically reduced word $r_{a,b}$ (in terms of letters-verices $a,b$). The word  $r_{a,b}$ is neither of the form $b^k a b^\lambda$ nor $a^k b a^\lambda$.

The \emph{Graph group} associated to a finite simplicial G-labeled graph $\Gamma$ is the group $G_\Gamma$ given by the following presentation:
\begin{center}
$G_\Gamma= \langle a,b  \vert r_{a,b}  $ when $a,b$ are connected by an edge $ \rangle $.
\end{center}
Obviously, Artin groups are Graph groups.

\begin{prop}\label{Artin.P3.2} Let $\Gamma$ be a finite simplicial G-graph, such that $Sim(\Gamma)=2$. We further assume that $H_1(\Gamma, \mathbb{Z} )=0$, then 
\begin{center}
$asdim\,G_\Gamma \leq 2 .$
\end{center}

\end{prop}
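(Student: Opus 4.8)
The plan is to follow the proof of Proposition~\ref{Artin.P3.1} almost verbatim, checking that the one property of the Artin edge relations used there --- that each generator of an edge group has infinite order --- survives for the more general relators $r_{a,b}$. First I would dispose of the disconnected case: reading off the presentation, if $\Gamma$ has connected components $\Gamma_1,\dots,\Gamma_n$ then $G_\Gamma=\ast_{i=1}^{n}G_{\Gamma_i}$, so by Theorem~\ref{Artin.T2.2} it suffices to bound each $asdim\,G_{\Gamma_i}$. Hence I may assume $\Gamma$ is connected; being a finite connected graph with $H_1(\Gamma,\mathbb{Z})=0$, it has vanishing first Betti number and is therefore a tree.

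Next I would realise $G_\Gamma$ as the fundamental group of a tree of groups carried by the barycentric subdivision of $\Gamma$. To each vertex $v$ of $\Gamma$ I attach the vertex group $\langle v\rangle\cong\mathbb{Z}$, to each edge $[a,b]$ of $\Gamma$ the vertex group $G_{[a,b]}=\langle a,b\mid r_{a,b}\rangle$, and for each incidence of a vertex $v$ with an edge $e=[v,w]$ an edge of the graph of groups joining $\langle v\rangle$ to $G_{e}$ with infinite cyclic edge group $\langle v\rangle$. Since the barycentric subdivision of a tree is again a tree, this is a tree of groups, and reading off its fundamental group returns exactly the presentation of $G_\Gamma$; its vertex groups are the two-generator one-relator groups $G_{[a,b]}$ together with copies of $\mathbb{Z}$, and all of its edge groups are infinite cyclic.

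The one point that genuinely uses the hypotheses on $r_{a,b}$, and which I expect to be the crux, is to check that this is a bona fide graph of groups, i.e.\ that each cyclic edge group injects into the adjacent vertex groups. Concretely I must verify that $a$ and $b$ both have infinite order in $G_{[a,b]}=\langle a,b\mid r_{a,b}\rangle$. Since $r_{a,b}$ is a cyclically reduced word involving both letters, Magnus's Freiheitssatz shows that the subgroup generated by $a$ alone, and that generated by $b$ alone, are each free of rank one, so $\langle a\rangle\cong\langle b\rangle\cong\mathbb{Z}$ inside $G_{[a,b]}$ and the edge inclusions are injective. The hypothesis that $r_{a,b}$ is neither of the form $a^{k}ba^{\lambda}$ nor $b^{k}ab^{\lambda}$ is precisely what rules out the degenerate situation in which one generator is a power of, or is killed by, the other --- collapsing $G_{[a,b]}$ and destroying its cyclic edge subgroups --- and so guarantees that $G_{[a,b]}$ is a genuine one-relator group.

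Finally I would invoke Theorem~\ref{Artin.T2.1} for this graph of groups. Every vertex group is either a copy of $\mathbb{Z}$, of asymptotic dimension $1$, or a finitely generated one-relator group $G_{[a,b]}$, of asymptotic dimension at most $2$ by Theorem~\ref{Artin.T2.3}; every edge group is infinite cyclic, of asymptotic dimension $1$. Hence
\[
asdim\,G_\Gamma \;\leq\; \max_{v,e}\{\,asdim\,G_{v},\ asdim\,G_{e}+1\,\} \;\leq\; \max\{2,\ 1+1\} \;=\; 2,
\]
which is the desired bound.
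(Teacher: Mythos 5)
Your proof is correct and follows essentially the same route as the paper's: realize $G_\Gamma$, for $\Gamma$ a tree, as a tree of groups whose vertex groups are the one-relator groups $G_{[a,b]}$ and whose edge groups are infinite cyclic, verify injectivity of the cyclic edge groups via the Freiheitssatz, and conclude by Theorems \ref{Artin.T2.1} and \ref{Artin.T2.3}. The only cosmetic difference is that you organize the decomposition via the barycentric subdivision (inserting copies of $\mathbb{Z}$ as vertex groups at the original vertices) and apply the Freiheitssatz locally in each $G_{[a,b]}$, whereas the paper amalgamates the one-relator subgroups directly and notes that each $\langle a\rangle$ is infinite cyclic in all of $G_\Gamma$ by the Freiheitssatz together with induction on the number of vertices; neither difference affects the bound.
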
 
\begin{proof}
 We assume that $\Gamma$ is connected, thus $\Gamma$ is a tree. Suppose that $G_\Gamma$ is a graph group defined by a single edge. Obviously, $G_\Gamma$ is one relator group. By theorem \ref{Artin.T2.3} we obtain that $asdim\,G_\Gamma \leq 2$.
 
It is important to note that if $a$ is a vertex of $\Gamma$, then $\langle a \rangle$ is an infinite cyclic subgroup of $G_\Gamma$. This follows by the Freiheitssatz (see [23], thm 5.1, page 198) and induction on the number of vertices. Since $Sim(\Gamma)=2$, the graph contains at least an edge.

We observe that if $\Gamma$ is a tree, then $G_\Gamma$ is the fundamental group of graph of groups where the vertex groups are one-relator Graph subgroups of $G_\Gamma$ and the edge groups are infinite cyclic subgroups. 
By theorem \ref{Artin.T2.1} and theorem \ref{Artin.T2.3} we have that $asdim\,G_\Gamma \leq 2.$.

\end{proof}

\section{The main results.}


\begin{thm}\label{Artin.T5.1}
If for all free of infinity Artin (Coxeter) groups the conjecture \ref{Artin.Q1.1} (conjecture \ref{Artin.Q1.2}) holds, then it holds for all Artin (Coxeter) groups.
\end{thm}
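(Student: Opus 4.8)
The plan is to argue by induction on the number of vertices $\sharp V(\Gamma)$, with the hypothesis of the theorem serving as the base of the induction for complete graphs. Concretely, I would prove the statement ``if the conjecture holds for every complete graph, then $asdim\,A_\Gamma \leq Sim(\Gamma)$ (resp. $asdim\,W_\Gamma \leq Sim(\Gamma)$) for every $\Gamma$'' by induction on $\sharp V(\Gamma)$. When $\Gamma$ is complete the group $A_\Gamma$ ($W_\Gamma$) is free of infinity, so the bound is exactly the hypothesis; this covers both the base case $\sharp V(\Gamma)=1$ and every inductive step in which $\Gamma$ happens to be complete. The whole content therefore lies in the case where $\Gamma$ is \emph{not} complete.

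Assume $\Gamma$ is not complete, so there is a missing edge $\{a,b\}$; in particular the vertex $v:=a$ satisfies $st(v) \neq V(\Gamma)$, where $st(v)=\{v\}\cup lk(v)$ is the star and $lk(v)$ the link of $v$. I would then reuse exactly the amalgam decomposition from the proof of Theorem \ref{Artin.T3.2}: setting $\Gamma_1=\Gamma\setminus\{v\}$, and letting $\Gamma_{lk(v)}$ and $\Gamma_{st(v)}$ be the full subgraphs spanned by $lk(v)$ and $st(v)$, the only defining relations involving $v$ pair it with vertices of $lk(v)$, so
\begin{center}
$A_\Gamma = A_{\Gamma_1}\underset{A_{\Gamma_{lk(v)}}}{\ast} A_{\Gamma_{st(v)}}$ and $W_\Gamma = W_{\Gamma_1}\underset{W_{\Gamma_{lk(v)}}}{\ast} W_{\Gamma_{st(v)}}$,
\end{center}
where all parabolic subgroups are again Artin (Coxeter) groups on the corresponding full subgraphs. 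Since $v\notin V(\Gamma_1)$ and $b\notin st(v)\supseteq lk(v)$, each of the three full subgraphs $\Gamma_1,\Gamma_{st(v)},\Gamma_{lk(v)}$ has strictly fewer vertices than $\Gamma$, so the inductive hypothesis applies to each of them.

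Now I would feed this into Theorem \ref{Artin.T2.1} applied to the two-vertex, one-edge graph of groups realizing the amalgam, which gives
\begin{center}
$asdim\,A_\Gamma \leq \max\{asdim\,A_{\Gamma_1},\ asdim\,A_{\Gamma_{st(v)}},\ asdim\,A_{\Gamma_{lk(v)}}+1\}$.
\end{center}
By induction the first two terms are bounded by $Sim(\Gamma_1)$ and $Sim(\Gamma_{st(v)})$, both $\leq Sim(\Gamma)$ by monotonicity of $Sim$ under passing to full subgraphs. For the edge term, the inductive hypothesis gives $asdim\,A_{\Gamma_{lk(v)}}\leq Sim(\Gamma_{lk(v)})=Sim(lk(v))$, and the crucial observation is the inequality $Sim(lk(v))+1\leq Sim(\Gamma)$: any maximal clique of $lk(v)$ together with $v$ is a clique of $\Gamma$, since every vertex of $lk(v)$ is adjacent to $v$. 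Hence all three terms are $\leq Sim(\Gamma)$ and the induction closes. The Coxeter case is word-for-word identical, using the analogous amalgam decomposition and the same inequality.

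The step I expect to be the only real obstacle is the bookkeeping that makes the ``$+1$'' coming from the edge group in Theorem \ref{Artin.T2.1} harmless: one must ensure that the chosen vertex $v$ sits on top of a maximal clique of its link, which is precisely what $Sim(lk(v))+1\leq Sim(\Gamma)$ encodes. Everything else --- that full subgraphs give parabolic subgroups that are themselves Artin (Coxeter) groups, and that the stated amalgam splitting holds --- is standard and was already used in Theorem \ref{Artin.T3.2}; the only care needed is to check that all three pieces are strictly smaller, so that the induction is well founded, which follows from $v\notin V(\Gamma_1)$ and $b\notin st(v)$.
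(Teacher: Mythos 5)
Your proposal is correct, but it is genuinely not the paper's argument --- and it is cleaner. The paper runs a \emph{double} induction, an outer one on $Sim(\Gamma)$ and an inner one on $\sharp V(\Gamma)$, together with a case analysis on the number $n_C$ of maximal complete subgraphs: when $n_C=1$ it picks a vertex $v_1$ of the unique maximal clique and a non-adjacent vertex $x$ outside it, giving $A_\Gamma = A_{\Gamma\setminus\{v_1\}} \underset{A_{\Gamma\setminus\{v_1,x\}}}{\ast} A_{\Gamma\setminus\{x\}}$; when $n_C>1$ it removes a whole family $v_1,\dots,v_\sigma$ of vertices, one from each maximal clique missing $v_1$, to get an analogous splitting. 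The point of these careful vertex choices is to force the edge group of the amalgam to have strictly smaller $Sim$, so the outer induction absorbs the ``$+1$'' in Theorem \ref{Artin.T2.1}, while the vertex groups are handled either by the outer induction (smaller $Sim$) or by the inner one (fewer vertices). Your star/link splitting $A_\Gamma = A_{\Gamma\setminus\{v\}} \ast_{A_{\Gamma_{lk(v)}}} A_{\Gamma_{st(v)}}$, valid for any vertex $v$ having a non-neighbour, achieves the same effect structurally: the inequality $Sim(lk(v))+1 \leq Sim(\Gamma)$ holds automatically because $v$ cones off every clique of its link, so a single induction on $\sharp V(\Gamma)$ suffices, with the theorem's hypothesis covering exactly the complete graphs, and no bookkeeping with maximal cliques is needed; your argument also absorbs disconnected graphs without a separate appeal to Theorem \ref{Artin.T2.2}. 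What you buy is economy and transparency; what the paper's version does is no more general, just organized around a different (and more delicate) choice of splitting. The one shared implicit step --- that the presentation-level pushout is honestly an amalgamated product, i.e. that parabolics on full subgraphs inject (standard for Coxeter groups, van der Lek's theorem for Artin groups) --- is treated as immediate by the paper as well, so you are on equal footing there. If you want to be fully meticulous, add one line for the degenerate case where $lk(v)$ is empty: then the amalgam is a free product over the trivial group, whose asymptotic dimension is $0$, and $0+1\leq Sim(\Gamma)$ still holds.
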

\begin{proof} We will show the theorem only for Artin groups since the proof for Coxeter groups is similar to that for Artin groups. 

We observe that the free of infinity Artin groups are those defined by a complete graph. 
By theorem \ref{Artin.T2.2} it suffices to prove the statement only for Artin groups defined by a connected graph.

Let $\Gamma$ be a finite simplicial connected labeled graph. We will use induction on $Sim(\Gamma)$. We distinguish two cases.\\
\textbf{Case 1.} The graph has at most two vertices.\\
Then $ A_\Gamma $ is either a free group or an one relator group. In the first subcase the asymptotic dimension is one and in the second subcase by theorem \ref{Artin.T3.2} the asymptotic dimension is two. In particular in both subcases the conjecture \ref{Artin.Q1.1} holds.\\
\textbf{Case 2.} The graph has at least three vertices.\\
For $Sim(\Gamma)=1 $ or $2$ the conjecture \ref{Artin.Q1.1} true, we assume that the statement of this theorem is true for any $Sim(\Gamma) \leq n$, where $n \geq 2 $. Let $\Gamma$ be a finite simplicial connected labeled graph with $Sim(\Gamma)=n+1$.

It is vital to use another induction here, in particular, we use induction on $n_{V(\Gamma)} = \sharp V(\Gamma)$.
If $n_{V(\Gamma)} = Sim(\Gamma)$, then the graph is a complete graph thus the group $A_\Gamma$ is free of infinity, so $asdim A_\Gamma  \leq n+1$. We assume that $asdim A_\Gamma  \leq n+1 $ for every graph with $n_{V(\Gamma)}< N+1$, we note that $N \geq Sim(\Gamma)$. Let $\Gamma$ be a finite simplicial connected labeled graph with $Sim(\Gamma)=n+1$ and $n_{V(\Gamma)}=N+1$. 

Let $\lbrace C_1 ,\ldots ,C_k \rbrace$ be the complete subgraphs of $\Gamma$ such that $Sim(C_i)=Sim(\Gamma)$, we will call these subgraphs \emph{maximal complete subgraphs}. We set $n_C = \sharp \lbrace C_1 ,\ldots ,C_k \rbrace$. We further distinguish two subcases.

\textbf{Subcase 2.1} $n_C =1$.\\
Let $C$ be the only maximal complete subgraph of $\Gamma$. Since $n_{V(\Gamma)} > Sim(\Gamma)$, there exist vertices $v_1, v_2 \in C$ and $x \notin C $ such that there is no edge connecting $v_1$ with $x$. We set $\Gamma_1 = \Gamma \setminus \lbrace  v_1  \rbrace $, $\Gamma_x = \Gamma \setminus \lbrace  x  \rbrace $ and $\Gamma_{1x} = \Gamma \setminus \lbrace  v_1 , x \rbrace $ and we observe that all these graphs form the follwing parabolic subgroups of $A_\Gamma$, $A_{\Gamma_1 }$ $A_{\Gamma_x }$ and $A_{\Gamma_{1x} }$. As a direct consequence of the presentation of $A_\Gamma$ we have that 

\begin{equation}\label{Artin.E1}
A_\Gamma = A_{\Gamma_1 } \underset{A_{\Gamma_{1x} }}{\ast} A_{\Gamma_x }.
\end{equation}
It is easy to see that $Sim(\Gamma_1), Sim(\Gamma_{1x} )< n+1 $, while $Sim( \Gamma_{1x} )=n+1$ but $n_{V(\Gamma_{1x})} < n_{V(\Gamma)}=N+1 $. Thus  $asdim\,A_{\Gamma_1 } , asdim\,A_{\Gamma_{1x} }< n+1$ while $asdim\,A_{\Gamma_x }  \leq n+1$.
Applying theorem \ref{Artin.T2.1} we conclude that $asdim\,A_\Gamma \leq n+1.$

\textbf{Subcase 2.2} $n_C >1$.\\
This case is a bit more complicate than the previous, the goal is to write the $A_\Gamma$ as an amalgamatated product, similar to the equality (\ref{Artin.E1}).

Recall that $S_C = \lbrace C_1 , \ldots ,C_{n_{C}} \rbrace$ is the set of the maximal complete subgraphs of $\Gamma$. Observe that there exists a vertex $v_2$ of $C_2$ and an edge $e=[v_0 , v_1]$ of $C_1$ such that there is no edge between $v_2$ and $v_1$. We will divide $S_C$ into two disjoint subsets $S_1$ and $S_2$, the first set consists of the maximal subgraphs containing $v_1$ and the second set containing the rest of the elements. After possibly relabeling the set $S_C$ we may assume that $S_2=  \lbrace C_2 , \ldots , C_{\sigma} \rbrace $. We fix an element $v_i$ for every $C_i \in S_2$ such that there is no edge between $v_1$ and $v_i$ (such elements exist and they are not equal to $v_0$).

We set $\Gamma_1 = \Gamma \setminus \lbrace  v_1  \rbrace $, $\Gamma_\sigma = \Gamma \setminus \lbrace   v_2 , \ldots , v_{\sigma}  \rbrace $ and $\Gamma_{1 \sigma} = \Gamma \setminus \lbrace  v_1 , \ldots ,v_\sigma \rbrace $. We observe that all these graphs form the follwing parabolic subgroups of $A_\Gamma$, $A_{\Gamma_1 }$ $A_{\Gamma_\sigma }$ and $A_{\Gamma_{1 \sigma} }$. As a direct consequence of the presentation of $A_\Gamma$ we have that 

\begin{equation}\label{Artin.E2}
A_\Gamma = A_{\Gamma_1 } \underset{A_{\Gamma_{1 \sigma} }}{\ast} A_{\Gamma_\sigma }.
\end{equation}
Observe that $Sim(\Gamma_{1 \sigma} )< n+1 $, while $Sim( \Gamma_{1} )= Sim(\Gamma_\sigma )=n+1$, however, $n_{V(\Gamma_{\sigma})}, n_{V(\Gamma_{1})} < n_{V(\Gamma)}=N+1 $. So we obtain $ asdim\,A_{\Gamma_{1\sigma} }< n+1$ while $ asdim\,A_{\Gamma_\sigma } , asdim\,A_{\Gamma_1 }  \leq n+1$.
By theorem \ref{Artin.T2.1} we conclude that $asdim\,A_\Gamma \leq n+1.$

This completes the proof that $asdim\,A_\Gamma \leq n+1$ in any subcase when $Sim(\Gamma)=n+1$, thus by induction the theorem holds.

\end{proof}

\begin{thm}\label{Artin1.11}
Let $\Gamma$ be a finite simplicial labeled graph, and let $W_\Gamma $ be the Coxeter group defined by $\Gamma$. Then
 \begin{center}
 $asdim W_\Gamma  \leq Sim(\Gamma)$.
\end{center}
\end{thm}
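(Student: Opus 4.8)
The plan is to deduce this theorem as a short corollary of the reduction result, Theorem~\ref{Artin.T5.1}, together with Januszkiewicz's embedding bound. Almost the entire substantive content has already been packaged into Theorem~\ref{Artin.T5.1}, so the remaining work is just to check that the free of infinity case is settled by a known estimate.

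First I would invoke Theorem~\ref{Artin.T5.1}: to establish Conjecture~\ref{Artin.Q1.2} for all Coxeter groups, it suffices to establish it for all free of infinity Coxeter groups. Thus I may assume that the defining graph $\Gamma$ is complete, and it is enough to prove $asdim\,W_\Gamma \leq Sim(\Gamma)$ under this hypothesis.

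Next I would record the elementary observation that for a complete graph on $\sharp V(\Gamma)$ vertices, the graph $\Gamma$ is itself the $1$-skeleton of the standard $(\sharp V(\Gamma)-1)$-simplex, so by the definition of $Sim$ we have $Sim(\Gamma) = \sharp V(\Gamma)$. I would then apply Januszkiewicz's theorem (see~\cite{TJ}), which gives the upper bound $asdim\,W_\Gamma \leq \sharp V(\Gamma)$ for every Coxeter group. Combining this with the equality $Sim(\Gamma) = \sharp V(\Gamma)$, valid in the free of infinity case, yields $asdim\,W_\Gamma \leq Sim(\Gamma)$, which is exactly the required bound for free of infinity Coxeter groups. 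The general statement then follows from Theorem~\ref{Artin.T5.1}.

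The main difficulty, namely propagating the bound from complete graphs to arbitrary graphs through the amalgamated product decompositions, is precisely what Theorem~\ref{Artin.T5.1} already accomplishes, so there is no further genuine obstacle here. The only point requiring care is the bookkeeping that $Sim(\Gamma)$ coincides with $\sharp V(\Gamma)$ exactly on complete graphs, which is what guarantees that Januszkiewicz's vertex-count bound matches the clique-number bound in the base case of the induction carried out in Theorem~\ref{Artin.T5.1}.
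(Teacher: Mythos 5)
Your proposal is correct and follows exactly the paper's own argument: Januszkiewicz's bound $asdim\,W_\Gamma \leq \sharp V(\Gamma)$, the observation that $\sharp V(\Gamma) = Sim(\Gamma)$ for complete graphs, and the reduction via Theorem~\ref{Artin.T5.1}. No differences worth noting.
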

\begin{proof} By an isometric embedding theorem of Januszkiewicz (see \cite{TJ}) we have that $asdimW_\Gamma \leq \sharp V(\Gamma)$. Observe that if $\Gamma$ is complete graph, then $\sharp V(\Gamma)=Sim(\Gamma)$. This means that the conjecture \ref{Artin.Q1.2} is true for complete graphs.\\
Thus by theorem \ref{Artin.T5.1} we conclude that conjecture \ref{Artin.Q1.2} is true for any Coxeter group.

\end{proof}

As a corollary of theorem \ref{Artin.T5.1} we have:

\begin{prop}\label{Artin1.12}
Let $A_\Gamma $ be an Artin group of large type with $Sim(\Gamma)=3$. Then
 \begin{center}
 $ asdim A_\Gamma =2$.
\end{center}
\end{prop}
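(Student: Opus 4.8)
The plan is to prove the two inequalities $asdim\,A_\Gamma \geq 2$ and $asdim\,A_\Gamma \leq 2$ separately. For the lower bound I would first note that $Sim(\Gamma)=3$ forces $\Gamma$ to contain an edge $[a,b]$ with label $m_{ab}\geq 3$, so the standard parabolic $A_{\{a,b\}}$ is a dihedral Artin group: a torsion-free one-relator group that is not free. By Theorem \ref{Artin.T2.3}(iii) it has asymptotic dimension $2$. Since standard parabolic subgroups of Artin groups are undistorted, $A_{\{a,b\}}$ embeds quasi-isometrically, and monotonicity of asymptotic dimension under quasi-isometric embedding gives $asdim\,A_\Gamma \geq 2$.

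For the upper bound the strategy is to follow the reduction of Theorem \ref{Artin.T5.1}. By Theorem \ref{Artin.T2.2} I may assume $\Gamma$ is connected, and then by induction on the number of vertices I would split $A_\Gamma$ as an amalgamated product of parabolic subgroups along full subgraphs, exactly as in the decompositions (\ref{Artin.E1}) and (\ref{Artin.E2}). The role of the large-type hypothesis is decisive here: every maximal clique is a triangle all of whose labels are at least three, and since $\tfrac{1}{m_{ab}}+\tfrac{1}{m_{bc}}+\tfrac{1}{m_{ac}}\leq 1$ whenever the three labels are $\geq 3$, no rank-three parabolic is of spherical type. Consequently $A_\Gamma$ is a two-dimensional Artin group, and the only ``free of infinity'' building blocks produced by the reduction are large-type triangle Artin groups. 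It therefore suffices to bound the asymptotic dimension of such a triangle group by $2$.

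The heart of the argument is the complete-graph case: a large-type triangle Artin group $A$ on $\{a,b,c\}$ satisfies $asdim\,A\leq 2$. I would split this into the affine case, where all three labels equal $3$ and $A$ is the Artin group of type $\widetilde A_2$, and the hyperbolic case, where some label exceeds $3$ and $A$ is hierarchically hyperbolic by \cite{HS}. In both cases $A$ acts properly and cocompactly on a two-dimensional nonpositively curved model (its Deligne complex, which is $CAT(0)$ for two-dimensional Artin groups, or a two-dimensional systolic or Helly complex), and the plan is to read off $asdim\,A\leq 2$ from the dimension of this model. Combined with the lower bound this already gives $asdim\,A=2$ for the triangle itself.

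The main obstacle is propagating the value $2$ through the amalgamations, rather than the value $3$ that Theorem \ref{Artin.T2.1} naively returns. The difficulty is concrete: when two triangles share an edge, as in $A_{\{a,b,c\}}\ast_{A_{\{a,b\}}}A_{\{a,b,d\}}$, the amalgamating subgroup is a dihedral Artin group of asymptotic dimension $2$, so the edge-plus-one term of Theorem \ref{Artin.T2.1} equals $3$ and overestimates the truth. To overcome this I would bypass the amalgamation estimate at the final step and instead realize the \emph{whole} group $A_\Gamma$ as acting geometrically on a single two-dimensional nonpositively curved complex assembled from the triangle models, obtaining $asdim\,A_\Gamma\leq 2$ directly from its dimension. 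The genuinely nontrivial input is thus the dimension-to-asymptotic-dimension step for these two-dimensional complexes; the remaining points, namely undistortion of the parabolics and two-dimensionality of the assembled complex (the latter immediate from the non-spherical large-type hypothesis), are routine.
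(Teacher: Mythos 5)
Your lower bound is essentially fine, although the appeal to undistortion of parabolic subgroups is an unnecessary (and nontrivial) detour: asymptotic dimension is monotone under passing to finitely generated subgroups regardless of distortion, so Theorem \ref{Artin.T2.3}(iii) applied to a dihedral parabolic already gives $asdim\,A_\Gamma \geq 2$. The genuine gap is in your upper bound, and it sits exactly where you placed the ``heart of the argument'': there is no theorem that lets you ``read off'' $asdim \leq 2$ from a proper cocompact action on a two-dimensional nonpositively curved complex. Wright's theorem \cite{Wr} bounds asymptotic dimension by dimension only for CAT(0) \emph{cube} complexes; for general two-dimensional CAT(0), systolic, or Helly complexes no such bound appears in the literature, and whether CAT(0) groups have finite asymptotic dimension at all is a well-known open problem. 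Worse, the specific model you name is not even a candidate: the action of a two-dimensional Artin group on its Deligne complex is cocompact but not proper, since vertex stabilizers are infinite dihedral parabolics, so it is not a geometric model in the sense you need. Your proposed fix for the amalgamation problem --- assembling a single global two-dimensional complex for $A_\Gamma$ --- rests on this same unavailable dimension-to-asymptotic-dimension step, so it fails for the same reason.

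What your proposal is missing is the paper's key input, which is combinatorial rather than geometric: by Jankiewicz \cite{KJ}, every large-type triangle Artin group splits as a finite graph of finite-rank \emph{free} groups, so Theorem \ref{Artin.T2.1} immediately gives $asdim \leq \max\lbrace 1, 1+1 \rbrace = 2$ for the free-of-infinity case; the paper then invokes the reduction of Theorem \ref{Artin.T5.1}, using that every parabolic subgroup of a large-type Artin group is again of large type. That said, your diagnosis of the propagation problem is correct and identifies a real weakness: in the inductive decompositions (\ref{Artin.E1}) and (\ref{Artin.E2}) the amalgamating parabolic can contain an edge, hence has asymptotic dimension $2$ by Theorem \ref{Artin.T3.2}, and Theorem \ref{Artin.T2.1} then returns the bound $3$, not $2$. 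The paper's one-line appeal to ``the proof of Theorem \ref{Artin.T5.1}'' does not explain how this is avoided, so on this point your criticism applies to the paper's own argument as well; but identifying the obstacle is not the same as overcoming it, and replacing the amalgamation estimate by an appeal to a nonexistent theorem about two-dimensional complexes does not close the gap.
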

\begin{proof} By \cite{KJ} every Artin group of large type defined by a triangle splits as a graph of free groups, thus by theorem \ref{Artin.T2.1} $ asdim A_\Gamma =2$. By the proof of theorem \ref{Artin.T5.1} and the fact that every parabolic subgroup of a large type Artin group is also a large type Artin group, we conclude that the proposition is true.

\end{proof}

\textit{E-mail}: panagiotis.tselekidis@queens.ox.ac.uk


\textit{Address:} Mathematical Institute, University of Oxford, Andrew Wiles Building, Woodstock Rd, Oxford OX2 6GG, U.K.



\begin{thebibliography}{99}


\bibitem{HHS} Jason Behrstock, Mark F. Hagen, and Alessandro Sisto. Asymptotic dimension and smallcancellation for hierarchically hyperbolic spaces and groups. Proc. Lond. Math. Soc. (3),
114(5):890–926, 2017

\bibitem{BD01} G Bell, A Dranishnikov, {\em On asymptotic dimension of groups}, Algebr. Geom. Topol. 1
(2001) 57-71 MR1808331


\bibitem{BD04} G.Bell, and A.Dranishnikov, {\em On asymptotic dimension of groups acting on trees}.  Geometriae Dedicata, 103 (2004), 89-101

\bibitem{BD05} G. C. Bell and A. N. Dranishnikov, {\em Asymptotic dimension in Bedlewo}, (2005) ArXiv:math.GR/0507570




\bibitem{BD08} G. Bell and A. Dranishnikov. {\em Asymptotic dimension}. Topology
Appl., 155(12):1265-1296, 2008.


  
\bibitem{BDK} G. C. Bell, A. N. Dranishnikov, and J. E. Keesling, {\em On a formula for the asymptotic dimension of free products}, Fundamenta Mathematicae 183.1 (2004): 39-45.   
 
\bibitem{BBF}  Bestvina, Mladen, Ken Bromberg, and Koji Fujiwara, {\em Constructing group actions on quasi-trees and applications tomapping class groups}, Publications mathematiques de l'IHES 122.1 (2015): 1-64.
  


\bibitem{Dra} Site: https://sites.google.com/a/scu.edu/rscott/pggt, {\em Problems in Geometric Group Theory}, Asymptotic Dimension -- Dranishnikov.


\bibitem{Dra.Cohom} A. Dranishnikov, {\em  Cohomological approach to asymptotic dimension}. Geom Dedicata 141, 59 (2009). https://doi.org/10.1007/s10711-008-9343-0

\bibitem{Dra08} A. Dranishnikov, {\em On asymptotic dimension of amalgamated products and right-angled Coxeter groups}, Algebr. Geom. Topol. 8 (2008), no 3, 1281-1293







\bibitem{EM} Elements de mathematique. Fasc. XXXIV. Groupes et algebres de Lie. Chapitre IV:
Groupes de Coxeter et systemes de Tits. Chapitre V: Groupes engendres par des reflexions. Chapitre
VI: Systemes de racines. Actualites Scientifiques et Industrielles, No. 1337, Hermann, Paris, 1968.






 



\bibitem{GP} E.Godelle, L.paris. $K( \pi ,1)$ and word problem, for infinite type Artin-Tits groups, and applications to virtual braid groups. Preprint, arXiv:1007.1365.




\bibitem{Gr} M.Gromov, {\em Asymptotic invariants of infinite groups}, in Geometric Group Theory, v.
2, 1-295, London Math. Soc. Lecture Note Ser., vol. 182, Cambridge Univ. Press, Cambridge,
1993.

\bibitem{HS} M.Hagen, A.Martin, A.Sisto, Extra-large type Artin groups are hierarchically hyperbolic
 	(2021)arXiv:2109.04387


\bibitem{KJ} Kasia Jankiewicz, Residual finiteness of certain 2-dimensional Artin groups (2020) arXiv:2006.04717 





\bibitem{LynSch} R. Lyndon and P. Schupp, {\em Combinatorial group theory}, Springer-Verlag,
1977.




\bibitem{Mats} D. Matsnev, {\em Asymptotic dimension of one relator groups}, (2006) ArXiv:




\bibitem{Os} Denis Osin, {\em Asymptotic dimension of relatively hyperbolic groups}, International Mathematics Research Notices, 2005, no. 35, 2143-2161

\bibitem{Pa} L. Paris. Universal cover of Salvetti’s complex and topology of simplicial arrangements of hyperplanes. Trans. Amer. Math. Soc. 340 (1993), no. 1, 149–178.



  

\bibitem{PT} Panagiotis Tselekidis, {\em Asymptotic Dimension of Graphs of Groups and One Relator Groups}, (2019), Arxiv: 1905.07925


\bibitem{TJ} Tadeusz Januszkiewicz, For Coxeter groups $z^{|g|} $ is a coefficient of a uniformly bounded
representation, Fund. Math. 174 (2002), no. 1, 79–86. MR MR1925487 (2003f:20061)

\bibitem{Wr} N.Wright, {\em Finite asymptotic dimension for CAT(0) cube complexes}, Geom. Topol. 16 (2012), 527–554.



\bibitem{Yu} Guoliang Yu, {\em The Novikov conjecture for groups with finite asymptotic dimension},
Ann. of Math. (2) 147 (1998), no. 2, 325-355


\end{thebibliography}
\end{document}